\documentclass[12pt]{article} 
\usepackage[utf8]{inputenc}
\usepackage{indentfirst}
\usepackage{amssymb}
\usepackage{verbatim}
\usepackage{amsthm}
\usepackage{fleqn}
\usepackage{graphicx}
\usepackage{epstopdf}
\usepackage{amsfonts}
\usepackage[numbers]{natbib}
\usepackage[colorlinks,citecolor=blue,urlcolor=blue]{hyperref}
\usepackage{amsmath}
\usepackage{tikz}
\usepackage{bm}

\usepackage{geometry}
\usepackage{pifont}
\usepackage{textcomp}%
\usepackage{manyfoot}%
\usepackage{booktabs}%
\usepackage{algorithm}%
\usepackage{algorithmicx}%
\usepackage{algpseudocode}%
\usepackage{listings}%
\usepackage[numbers]{natbib}

\newcommand{\cE}{\mathcal{E}}
\newcommand{\cL}{\mathcal{L}}

\newcommand{\cF}{\mathcal{F}}

\newcommand{\cK}{\mathcal{K}}

\newcommand{\cU}{\mathcal{U}}

\newcommand{\cN}{\mathcal{N}}

\newcommand{\RR}{\mathbb{R}}

\newcommand{\PP}{\mathbb{P}}
\newcommand{\QQ}{\mathbb{Q}}
\newcommand{\drm}{\mathrm{d}}
\newcommand{\UU}{\mathbb{U}}
\newcommand{\EE}{\mathbb{E}}
\numberwithin{equation}{section}

\newcommand{\ep}{\epsilon}

\newcommand{\la}{\lambda}

\allowdisplaybreaks
\theoremstyle{thmstyleone}%
\newtheorem{theorem}{Theorem}

\theoremstyle{thmstyletwo}%
\newtheorem{remark}{Remark}%

\theoremstyle{thmstylethree}%

\newtheorem{assumption}[theorem]{Assumption}
\newtheorem{lemma}[theorem]{Lemma}

\begin{document}
	
	\title{A Stochastic Maximum Principle for Partially Observed Jump-Diffusion Systems with State-Dependent Counting-Process Observations}

	\author{Jie Xiong%
		\thanks{This work was supported by the National Key R\&D Program of China (2022YFA1006102), and the National Natural Science Foundation of China (12471418).}
		\thanks{Department of Mathematics and SUSTech International center for Mathematics, Southern University
			of Science and Technology, Shenzhen, 518055, P. R. China. Email: xiongj@sustech.edu.cn.}
		\and Ying Yang%
		\thanks{Department of Mathematics, Southern University
			of Science and Technology, Shenzhen, 518055, P. R. China. Email:12331007@mail.sustech.edu.cn}
	}
	\date{}
	\maketitle
	\begin{abstract}
	
	This paper studies a partially observed stochastic control problem for jump-diffusion state processes observed through multivariate counting processes with state-dependent intensities. In contrast to diffusion observations or observation jumps with state-independent intensities, each observation jump carries information about the latent state and enters the likelihood-ratio dynamics, producing a coupled variational structure involving both the state perturbation and the likelihood-ratio perturbation. By introducing a reference probability measure and augmenting the state with the counting-process likelihood ratio, we derive a stochastic maximum principle for the resulting partially observed control problem. The necessary optimality condition is expressed as a conditional Hamiltonian stationarity relation with respect to the observation filtration. As an illustration, we apply the result to a linear-quadratic execution model and obtain a Riccati-type feedback driven by the conditional mean of the latent state. The resulting feedback is illustrated numerically by a particle-filter implementation under nonlinear point-process filtering. 
	\end{abstract}

	\textbf{Keywords}: Stochastic maximum principle; partially observed control; jump-diffusion processes; counting-process observations; likelihood ratio; nonlinear filtering.

\section{Introduction}\label{sec:intro}
Partially observed stochastic control problems with discontinuous information arise when decisions are based not on a diffusion-type observation process, but on the arrival times and marks of random events. A natural mathematical formulation is obtained by letting the observation filtration be generated by multivariate counting processes. When the counting intensities depend on the unobserved state, each observation jump carries information about the latent dynamics. This creates a partially observed control problem in which the observation mechanism, the filtering equation, and the variational analysis are coupled through the state-dependent intensities.

This observation structure is technically distinct from the classical Brownian setting.  Under diffusion observations, the innovation process is a continuous martingale and the Zakai equation is driven by a Wiener integral.  Under counting-process observations, information arrives only at discrete random times; the likelihood-ratio process that connects the physical and reference measures acquires jump terms of the form
$(\lambda_k(X(t-),t-)-1)  \drm(Y_k(t)-t)$, and the corresponding
unnormalized filtering equation is driven by compensated counting
martingales.  Consequently, the variational analysis track not only the perturbation of the state process, but also the perturbation of the likelihood ratio induced by the state-dependent observation intensities.

The stochastic maximum principle (SMP) for fully observed
jump-diffusion systems is well established; see   \cite{tang1994necessary} for the convex-control case and   \cite{oksendal2010maximum} for forward--backward systems with jumps.  For partially observed systems under diffusion observations, the SMP was developed by  \cite{tang1998maximum} and extended to jump-diffusion state
dynamics by  \cite{xiao2011maximum} and  \cite{wang2015linear}; see also~\cite{bensoussan1992stochastic,wang2008kalman,wang2013maximum} for related formulations and further references.  

On the filtering side,  \cite{zeng2003partially} formulated a partially observed micromovement model for asset prices with counting-process observations and developed a Markov chain approximation for the associated filtering equations;  \cite{xiong2011branching} subsequently established convergence of a branching particle approximation to the optimal filter.  
For applications of counting-process observation models,  \cite{xiong2020mean} derived a pre-commitment solution of the mean--variance portfolio selection problem.

These works show that counting-process observations naturally lead to nonlinear filtering problems. However, the corresponding stochastic maximum principle for control systems with state-dependent counting observations has not been covered by the existing partially observed SMP literature. The closest related work is  \cite{zheng2023global}, which establishes a global maximum principle for a partially observed forward--backward system whose observation equation contains a jump component. The main difference between \cite{zheng2023global} and the present work lies in the observation structure. In \cite{zheng2023global}, the jump intensity in the observation equation is independent of the latent state. Hence the jump component does not carry state-dependent information through its intensity and does not generate a state-dependent likelihood-ratio variation.

In contrast, in the present paper the observation intensities are
\(\lambda_k(X(t-),t-)\), depending explicitly on the unobserved state. Each observation jump therefore updates the information on the latent state through the likelihood ratio. At the level of the first-order variation, this state-dependence produces the coupling term which disappears when the observation intensity is state-independent. This coupling between the state variation and the likelihood-ratio variation is the main technical feature that necessitates the reference-measure formulation and the augmented variational analysis developed below.

This paper addresses this gap.  We consider a controlled jump-diffusion state process observed through multivariate counting
processes with state-dependent intensities, and restrict the
admissible controls to the observation filtration.  By introducing a reference probability measure under which the observation processes have unit intensities, we transform the partially observed problem into an augmented control problem involving the state and the counting-process likelihood ratio.  A variational analysis of the augmented system then yields a stochastic maximum principle expressed as a conditional Hamiltonian stationarity condition.

The contributions are as follows.
\begin{enumerate}
	\item We introduce a reference-measure formulation for partially observed jump-diffusion control problems in which the observation filtration is generated by state-dependent multivariate counting processes. The resulting augmented state consists of the original state process and the counting-process likelihood ratio.
	
	\item We establish the first-order variational equations and the
	required moment estimates for the augmented system. A key feature is that the state-dependence of the observation intensities produces an additional coupling term in the likelihood-ratio variation, involving the derivative of the intensity with respect to the latent state and the first-order state variation. This term is absent in models with
	state-independent observation jumps.

	\item We derive a stochastic maximum principle in the form of a conditional Hamiltonian stationarity condition with respect to the observation filtration. This provides a first-order necessary condition for partially observed jump-diffusion control systems under state-dependent counting-process observations.
\end{enumerate}

As an illustration, we apply the theorem to a linear-quadratic execution model and obtain a Riccati-type feedback driven by the conditional mean of the latent state. A particle approximation is then used to illustrate the implementation of this feedback from counting-process observations.

The remainder of the paper is organized as follows. Section~\ref{sec:formulation} formulates the partially observed
control problem and introduces the reference probability measure.
Section~\ref{sec:SMP} establishes the stochastic maximum principle.
Section~\ref{sec:app} applies the theory to the financial execution
problem, derives the Riccati-filtered feedback, and presents the
numerical implementation. Section~\ref{sec:conclusion} concludes.  Proofs of the technical lemmas are collected in Appendix~\ref{App:1}. Numerical implementation details and additional experiments are reported in Appendices~\ref{App:2} and~\ref{App:3}, respectively.  

\section{Problem formulation and preliminaries}\label{sec:formulation}
\subsection{Problem formulation}
The one-dimensional formulation is adopted to keep the notation
transparent. Extensions to multidimensional state processes can be
developed under the corresponding vector and matrix notation, but are not pursued here.

Let $T>0$ be fixed. Consider a complete filtered probability space $(\Omega, \mathcal{F}, \{\mathcal{F}_t\}_{t \in [0,T]}, \mathbb{P})$ supporting a standard Brownian motion $W$ and an independent Poisson random measure $\bar{N}(\mathrm{d}\zeta, \mathrm{d}t)$ on $\mathbb{R}^+ \times \mathcal{E}$ with a $\sigma$-finite intensity measure $\nu(\mathrm{d}\zeta)\mathrm{d}t$. Here, $(\mathcal{E}, \mathcal{B}(\mathcal{E}))$ is a Polish space satisfying $\int_{\mathcal{E}} (1 \wedge |\zeta|^2) \nu(\mathrm{d}\zeta) < \infty$. We denote by $\tilde{N} = \bar{N} - \nu \mathrm{d}t$ the associated compensated random measure. Throughout this paper, $\mathbb{E}^{\mathbb{P}}$ denotes the expectation taken with respect to the probability measure $\mathbb{P}$. Consider the following stochastic differential equation (SDE) with jumps governing the asset price $X(t)$ on $[0,T]$:
\begin{equation}\label{eq:state}
	\left\{
	\begin{aligned}
		\drm X(t)&=b\left(t,X(t),u(t)\right)\drm t+\sigma\left(t,X(t),u(t)\right) \drm W(t)+\int_\cE \eta(t,X(t-),u(t-),\zeta)\tilde{N}(\drm \zeta,\drm t), \\
		X(0)&=x\in\RR,
	\end{aligned}
	\right.
\end{equation}
where $u(\cdot)$ is a control process taking values in a non-empty convex set $\UU\subseteq\RR$, and $W$ is a Brownian motion that is independent of \(\tilde{N}\). Moreover, $b:[0,T]\times \RR\times \UU\to \RR$,  $\sigma:[0,T]\times\RR\times\UU\to \RR$, \(\eta:[0,T]\times\RR\times\UU\times\RR\to\RR\).

Although the intrinsic value process cannot be observed directly, it can be partially observed from the trade-by-trade price process $Y$. Due to price discreteness, $Y$ is in a discrete state space given by the multiples of tick, the minimum price variation set by trading regulation. Therefore, following \cite{zeng2003partially} and \cite{xiong2011branching}, we model the prices as a collection of counting processes with the following specification
\begin{equation}\label{eq:observe}
	\overrightarrow{Y}(t)=
	\begin{pmatrix}
		N_1(\int_0^t \lambda_1( X(s-),s-)ds) \\
		N_2(\int_0^t \lambda_2( X(s-),s-)ds)\\
		\cdots \\
		N_n(\int_0^t \lambda_n(X(s-),s-)ds)
	\end{pmatrix},
\end{equation} 
where its $k$-th component, $Y_k(t)$, counts the number of trades executed at the $k$-th price level up to time $t$. Consequently, the pair $(X, \overrightarrow{Y})$ constitutes a partially observed system driven by counting-process observations.

We want to study the optimal control problem associated with the cost functional
\begin{equation}\label{eq:cost}
	\begin{aligned}
		J(u) = \EE^\PP[\Phi(X(T)) + \int_0^T f(t,X(t),u(t)) dt],
	\end{aligned}
\end{equation}
where $f:[0,T]\times\mathbb{R}\times\UU\to \mathbb{R}$, and $\Phi:\mathbb{R}\rightarrow \mathbb{R}$ are deterministic functions.  

Let $\mathcal{F}_t^{\overrightarrow{Y}} = \sigma\{ \overrightarrow{Y}(s) \mid 0 \leq s \leq t\}$ denote the filtration generated by the observation process, representing the information available up to time $t$.We define $\mathcal{U}_{ad}$, the set of admissible controls, as the space of all $\mathbb{U}$-valued, $\mathcal{F}_t^{\overrightarrow{Y}}$-predictable processes $u(\cdot)$ such that $\mathbb{E}^{\mathbb{P}} \int_0^T |u(t)|^{4p_0} \mathrm{d}t < \infty$ for some $p_0 > 1$. The optimal control problem considered in this paper is to find an optimal control $\tilde{u}(\cdot) \in \mathcal{U}_{ad}$ that minimizes the cost functional \eqref{eq:cost}, that is,$$J(\tilde{u}(\cdot)) = \inf_{u(\cdot) \in \mathcal{U}_{ad}} J(u(\cdot)).$$To establish the stochastic maximum principle for this partially observed problem, we impose the following assumptions.
\begin{assumption}\label{Ass:1}
	\begin{enumerate}
		\item $b(\cdot, 0, 0), \sigma(\cdot, 0, 0)$,   $\int_{\mathcal{E}} |\eta(\cdot, 0, 0,  \zeta)|^2 \nu(\drm\zeta)$, $f(\cdot,0,0)$, and $\Phi(0)$ are uniformly bounded.
		\item The coefficients $b$ and $\sigma$ are continuously differentiable with respect to $(x,u)$, and are bounded by $k(1+|x|+|u|)$ for some constant $k>0$. Furthermore, their derivatives $b_x, b_u, \sigma_x$, and $\sigma_u$ are continuous and uniformly bounded.
		\item $\eta$ is continuously differentiable with respect to $(x,u)$, and there exists a constant $k_{\eta}>0$ such that $\int_{\mathcal{E}}|\eta(t,x,u,\zeta)|^2\nu(\drm\zeta) \le k_{\eta}(1+|x|^2+|u|^2)$, and $\int_{\mathcal{E}}|\eta(t,x,u,\zeta)|^4\nu(\drm\zeta) \le k_{\eta}(1+|x|^4+|u|^4)$. Moreover, there exists a constant $L>0$ such that $\int_{\mathcal{E}}(|\eta_x(t,x,u,\zeta)|^2+|\eta_u(t,x,u,\zeta)|^2+|\eta_x(t,x,u,\zeta)|^4+|\eta_u(t,x,u,\zeta)|^4)\nu(\drm\zeta) \le L$.
		\item The cost functions $f$ and $\Phi$ are continuously differentiable with respect to $(x,u)$ and $x$, respectively. Moreover, there exists a constant $k_c>0$ such that $|f_x(t,x,u)|+|f_u(t,x,u)| \le k_c(1+|x|+|u|)$, and $|\Phi_x(x)| \le k_c(1+|x|)$.
	\end{enumerate}
\end{assumption}
\begin{remark}
	Under this assumption, \eqref{eq:state} is well-posed by standard arguments.
\end{remark}

\begin{assumption}\label{Ass:2}
	Under $\PP$, $N_1, \dots, N_n$ are independent unit  Poisson processes, and   $\{N_1, \dots, N_n\}$ is independent of $X$.
\end{assumption}

\begin{assumption}\label{Ass:3}
	The intensity at level $k$ takes the form
	$\lambda_k(x,t)=a(x,t)\,p_k(x)$, where $a(x,t)=\sum_{k=1}^{n}\lambda_k(x,t)$
	is the total intensity at time $t$ with state $x=X(t)$, and
	$p_k(x) = p(y_k\mid x)$ is the transition probability from $x$
	to the $k$th price level $y_k$, satisfying
	$\sum_{k=1}^{n}p_k(x) = 1$ for all $x\in\RR$.
\end{assumption}

In this paper the observation intensities are not controlled directly. And the control affects the observation law only indirectly through the state process \(X\).

\begin{assumption}\label{Ass:4} 
	\begin{enumerate}
		\item The total intensity $a(x,t)$ is uniformly bounded from above and  below, that is, there exist constants $0<K_1<K_2$ such that  $ K_1 \leq a(x,t) \leq K_2,$  $\forall  (x,t)\in\RR\times[0,T].$ Furthermore, $a(\cdot,t)$ is continuously differentiable in $x$, and $a'(x,t)=\frac{\drm}{\drm x}a(x,t)$ is continuous in $x$ and uniformly  bounded in $(x,t)\in\RR\times[0,T]$.
		\item For each $k$, $p_k$ is continuously differentiable with $p_k'$ continuous and bounded. Moreover, $
		\delta_p :=\inf_{x\in\RR, 1\le k\le n} p_k(x) > 0.$
	\end{enumerate}
	
\end{assumption}
\begin{remark}
	Assumption~\ref{Ass:4} implies $\delta_\lambda := K_1\delta_p \le \lambda_k(x,t) \le K_2$ for all $(k,x,t)$, ensuring $\log\lambda_k$ is uniformly bounded and both $M(t)$ and $M(t)^{-1}$ have finite moments of all orders. The boundedness of $a(x,t)$ restricts the analysis to active trading periods. Meanwhile, the lower bound $\inf_{x,k} p_k(x)>0$ ensures every price level remains reachable uniformly in the latent value, consistent with the depth maintained by market makers in liquid electronic markets.
	
	Theoretically, the uniform upper bound $K_2$ ensures $\Psi_q(t) := \sum_k[\lambda_k^q - 1 - q(\lambda_k-1)]$ to be  uniformly	bounded, which is necessary to close the Gronwall argument in Lemma~\ref{lem:moment-PQ}. Moreover, relaxing this to a state-dependent envelope is feasible under exponential moment conditions via localization. Economically, the total intensity lower bound $\delta_{\la} > 0$ restricts our framework to permanent market activity; accommodating inactive periods demands time-inhomogeneous localization and is left for future work.
\end{remark}

\subsection{Problem with complete observation}\label{sub:co}
Let $\hat{\mathcal{F}}_t = \mathcal{F}_t^W \vee \mathcal{F}_t^{\tilde{N}} \vee \mathcal{F}_t^{\overrightarrow{Y}} \vee \mathcal{N}$ be the augmented filtration, where $\mathcal{N}$ contains all $\mathbb{P}$-null sets. We consider the filtered probability space $(\Omega, \hat{\mathcal{F}}_T, \{\hat{\mathcal{F}}_t\}_{t\in[0,T]}, \mathbb{P})$. Under Assumptions \ref{Ass:3} to \ref{Ass:4}, since the observation intensities are uniformly bounded, it follows from Girsanov's theorem for point processes that there exists a reference probability measure $\mathbb{Q}$, equivalent to $\mathbb{P}$ on $\hat{\mathcal{F}}_T$, under which $X$ retains its original $\mathbb{P}$-law, and the components $Y_1, \dots, Y_n$ become independent unit Poisson processes that are independent of $X$.

The measure change is governed by the Radon-Nikodym derivative $M(t) =\left.\frac{\mathrm{d}\mathbb{P}}{\mathrm{d}\mathbb{Q}}\right|_{\hat{\mathcal{F}}_t}$, which is a uniformly integrable 
$\mathbb{Q}$-martingale representing the counting-process likelihood ratio (see, e.g.,   \cite{zeng2003partially}), and satisfies
\begin{align}\label{eq:M}
	\mathrm{d}M(t) = \sum_{k=1}^n \bigl(\lambda_k(X(t-),t-) - 1\bigr) M(t-)\mathrm{d}\bigl(Y_k(t) - t\bigr),
\end{align}
with $M(0)=1$.

For $g\in C_b(\RR)$, define the unnormalized filter $\langle V(t),g\rangle := \EE^\QQ\left[M(t) g(X(t))\mid\cF^{\overrightarrow{Y}}_t\right]$.
By the Kallianpur--Striebel formula, the conditional distribution $\pi(t)$ of $X(t)$ given $\cF_t^{\overrightarrow{Y}}$ satisfies
\begin{equation}\label{eq:ksfor}
	\langle \pi(t),g\rangle=\langle V(t),g\rangle/\langle V(t),1\rangle.
\end{equation}
The filter $\pi(t)$ will be used in Section~\ref{sec:app}.

Under the new probability measure $\QQ$, the   state equation is as follows, for $t\in[0,T]$,
\begin{equation}\label{eq:Qstate}
	\left\{
	\begin{aligned}
		\drm X(t) &= b(t, X(t), u(t))\drm t + \sigma(t, X(t), u(t))\drm W(t) + \int_\cE\eta(t, X(t-), u(t-), \zeta)\tilde N(\drm\zeta, \drm t),\\
		\drm M(t) &= \sum_{k=1}^{n}\bigl(\lambda_k(X(t-), t-) - 1\bigr) M(t-)\drm\bigl(Y_k(t) - t\bigr),\\
		X(0) &= x \in \RR, \qquad M(0) = 1.
	\end{aligned}
	\right.
\end{equation} 
Here, the process $X$ is also unobservable, but it is independent of the observation process $\overrightarrow{Y}$.

Furthermore, under the new measure $\QQ$, we rewrite the cost functional \eqref{eq:cost} as follows
\begin{align}\label{eq:Qcost}
	J( u)= \EE^{\QQ}\left[\Phi(X(T))M(T)+\int_0^T M(t) f(t,X(t),u(t))dt\right].
\end{align}

Thus, the original optimization problem is equivalent to minimizing the cost functional \eqref{eq:Qcost} over all admissible controls $u(\cdot) \in \mathcal{U}_{ad}$, subject to the state equation \eqref{eq:Qstate}.

\section{Stochastic maximum principle}\label{sec:SMP}
In this section, we derive the stochastic maximum principle for the problem of minimizing \eqref{eq:Qcost} subject to \eqref{eq:Qstate}. Let $u(\cdot)\in\mathcal U_{ad}$ be an optimal control and let $(X(\cdot),M(\cdot))$ be the corresponding optimal state trajectory. We assume that $u(\cdot)$ is an interior admissible control in the following sense: for every bounded
$\mathcal F^{\overrightarrow Y}$-predictable process $v(\cdot)$, there exists $\epsilon_0>0$ such that $
u^\epsilon(\cdot):=u(\cdot)+\epsilon v(\cdot)\in\mathcal U_{ad} $ for all $ |\epsilon|<\epsilon_0 .$
This holds, for instance, when $\mathbb U=\mathbb R$, or when $u(\cdot)$ stays a positive distance from $\partial\mathbb U$  so that a bounded perturbation does not exit $\mathbb U$ for small $\epsilon$. Since $|\epsilon|<\epsilon_0$ is two-sided, both $u\pm\epsilon v$ are admissible, which is what the first-order
analysis below requires. Let \((X^\epsilon(\cdot),M^\epsilon(\cdot))\) denote the corresponding
state trajectory, i.e., the unique solution to \eqref{eq:Qstate} driven by \(u^\epsilon(\cdot)\).

For notational simplicity, we suppress the explicit dependence of the coefficients on the state and control processes. Specifically, we write $b(t)$ for  $b(t, X(t), u(t))$, and similarly $\sigma(t)$, $\eta(t,\zeta)$, and  	$f(t)$ for the corresponding functions evaluated along the optimal pair $(X(t), u(t))$. Since $\lambda_k$ does not depend on the control directly, we write $\lambda_k(t)$ for $\lambda_k(X(t-), t)$. The same  convention applies to the perturbed processes. For instance  $b^\epsilon(t) := b(t, X^\epsilon(t), u^\epsilon(t))$, and likewise  	$\sigma^\epsilon(t)$, $\eta^\epsilon(t,\zeta)$, $f^\epsilon(t)$, and  $\lambda_k^\epsilon(t) := \lambda_k(X^\epsilon(t-), t)$, the latter  differing from $\lambda_k(t)$ through the perturbed state $X^\epsilon$.

We first establish moment estimates for the likelihood ratio $M$, 
which ensure the required integrability under~$\QQ$. The proof uses  standard moment estimation techniques and is deferred to 
Appendix \ref{App:1}.
\begin{lemma}\label{lem:moment-PQ}
	Under Assumption \ref{Ass:4}, for any $q\in\RR$,
	\begin{align}\label{eq:Mq}
		\sup_{0\le t\le T}\EE^\QQ[|M(t)|^q] < \infty.
	\end{align}
	Consequently, for any $u\in\cU_{ad}$,
	\begin{align}\label{eq:up}
		\EE^\QQ \int_0^T |u(t)|^4 \drm t < \infty.
	\end{align}
\end{lemma}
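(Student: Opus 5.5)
Since $M$ solves a linear equation driven by the compensated counting processes $Y_k(t)-t$, which are $\QQ$-martingales, the plan is to apply Itô's formula for jump processes to $M(t)^q$. By Assumption~\ref{Ass:4}, $\lambda_k\ge\delta_\lambda>0$, so the jumps satisfy $M(t)=\lambda_k(t)M(t-)$ whenever $\Delta Y_k(t)=1$. In particular $M$ is strictly positive and $|M|^q=M^q$ is well defined for every real $q$. Alternatively, the Doléans-Dade exponential gives
\begin{align*}
M(t)=\exp\Bigl(\sum_{k=1}^n\int_0^t\log\lambda_k(s)\,\drm Y_k(s)-\sum_{k=1}^n\int_0^t(\lambda_k(s)-1)\,\drm s\Bigr).
\end{align*}
Itô's formula then yields
\begin{align*}
M(t)^q=1+\sum_{k=1}^n\int_0^t M(s-)^q\bigl(\lambda_k(s)^q-1\bigr)\,\drm\bigl(Y_k(s)-s\bigr)+\int_0^t M(s-)^q\,\Psi_q(s)\,\drm s,
\end{align*}
where $\Psi_q(s)=\sum_k[\lambda_k^q-1-q(\lambda_k-1)]$ is exactly the quantity in the remark after Assumption~\ref{Ass:4}. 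Since $\delta_\lambda\le\lambda_k\le K_2$, we have $|\Psi_q|\le C_q$ for every real $q$, including negative ones, and $|\lambda_k^q-1|$ is bounded as well.

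Next, localize with $\tau_m=\inf\{t:M(t)\vee M(t)^{-1}\ge m\}\wedge T$. Before $\tau_m$ the integrand of the stochastic integral is bounded, so that integral is a true $\QQ$-martingale and has zero expectation. Taking expectations gives
\begin{align*}
\EE^\QQ[M(t\wedge\tau_m)^q]\le 1+C_q\int_0^t\EE^\QQ[M(s\wedge\tau_m)^q]\,\drm s,
\end{align*}
where we use $M(s-)=M(s)$ for a.e.\ $s$ in the $\drm s$ integral. Gronwall's inequality gives $\EE^\QQ[M(t\wedge\tau_m)^q]\le e^{C_qT}$ uniformly in $m$ and $t$. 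Because $Y$ has finitely many jumps on $[0,T]$ and $\log M$ is bounded by $n\max|\log\lambda|\cdot\#\text{jumps}+CT$, we have $\tau_m\uparrow T$ a.s. Fatou's lemma then gives \eqref{eq:Mq}. The only delicate point is justifying that the localized stochastic integral is a martingale; I would handle it by the stopping argument above, or directly from the explicit exponential, which gives $M^q\le C^{\#\text{jumps}}$ and hence integrability via the Poisson tail.

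For \eqref{eq:up}, write $\EE^\QQ\int_0^T|u|^4\drm t=\EE^\PP\int_0^T M(T)^{-1}|u|^4\drm t$, using $\drm\QQ/\drm\PP=M(T)^{-1}$ on $\hat\cF_T$. Apply Hölder with exponents $p_0$ and $p_0'=p_0/(p_0-1)$:
\begin{align*}
\EE^\PP\int_0^T M(T)^{-1}|u(t)|^4\drm t\le\Bigl(\EE^\PP\int_0^T|u|^{4p_0}\drm t\Bigr)^{1/p_0}\Bigl(T\,\EE^\PP[M(T)^{-p_0'}]\Bigr)^{1/p_0'}.
\end{align*}
Finally, $\EE^\PP[M(T)^{-p_0'}]=\EE^\QQ[M(T)^{1-p_0'}]<\infty$ by \eqref{eq:Mq} with $q=1-p_0'$, and the first factor is finite by the definition of $\cU_{ad}$. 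This is why negative $q$ must be covered in \eqref{eq:Mq}.
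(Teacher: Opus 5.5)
Your proposal is correct and follows essentially the same route as the paper: It\^{o}'s formula for $M^q$ with drift $\Psi_q$, localization by stopping times with Gronwall uniform in the level, $\tau_m\uparrow T$ from the explicit exponential plus Fatou, and then H\"{o}lder with exponent $p_0$ combined with $\EE^\PP[M^{q}]=\EE^\QQ[M^{q+1}]$ at a negative power. Your H\"{o}lder step on the product space $\Omega\times[0,T]$ is a slightly tidier version of the paper's H\"{o}lder-then-Jensen estimate.
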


The following lemma establishes the finiteness of the fourth moment of the state process $X$. The detailed proof is deferred to the Appendix \ref{App:1} for brevity.
\begin{lemma}\label{lem:M4}
	Let Assumption \ref{Ass:1} to \ref{Ass:4} hold. For any \(u(\cdot)\in\cU_{ad}\), there exists a constant \(C>0\) such that the solution \(X(\cdot)\) to the state equation \eqref{eq:Qstate} satisfies
	\begin{align}
		&\sup_{0\leq t\leq T}\EE^\QQ \lvert X(t)\rvert^4\leq  C\left(1+ \EE^\QQ  \int_0^T \lvert u(s)\rvert^4 \drm s\right) .\label{eq:X8}
	\end{align} 
\end{lemma}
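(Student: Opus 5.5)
We prove \eqref{eq:X8} with a standard fourth-moment estimate for jump SDEs, carried out directly under $\QQ$. Under $\QQ$, $W$ is still a Brownian motion and $\tilde N$ a compensated Poisson random measure with compensator $\nu(\drm\zeta)\drm t$, because $X$ keeps its $\PP$-law and the measure change only affects the counting processes. The finiteness of the right-hand side is guaranteed by \eqref{eq:up} in Lemma~\ref{lem:moment-PQ}.

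\emph{Step 1: splitting the equation.} Write
\[
X(t)=x+\int_0^t b(s)\,\drm s+\int_0^t\sigma(s)\,\drm W(s)+\int_0^t\int_\cE\eta(s,\zeta)\,\tilde N(\drm\zeta,\drm s)
\]
and use $|a_1+\dots+a_4|^4\le 4^3\sum|a_i|^4$.
\begin{itemize}
\item \emph{Drift.} By H\"older's inequality and the linear growth in Assumption~\ref{Ass:1}(2),
\[
\EE^\QQ\Bigl|\int_0^t b\,\drm s\Bigr|^4\le C\int_0^t\bigl(1+\EE^\QQ|X|^4+\EE^\QQ|u|^4\bigr)\drm s .
\]
\item \emph{Diffusion.} The Burkholder--Davis--Gundy inequality followed by H\"older gives the same bound.
\item \emph{Jumps.} This is the only delicate term. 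Use the Kunita (BDG-type) inequality for Poisson stochastic integrals:
\[
\EE\Bigl|\int_0^t\!\!\int_\cE\eta\,\tilde N\Bigr|^4\le C\,\EE\Bigl(\int_0^t\!\!\int_\cE|\eta|^2\nu(\drm\zeta)\drm s\Bigr)^2+C\,\EE\int_0^t\!\!\int_\cE|\eta|^4\nu(\drm\zeta)\drm s .
\]
Assumption~\ref{Ass:1}(3) supplies both the square and the fourth-power growth bounds. Together with H\"older on the first term, this term is also bounded by $C\int_0^t(1+\EE^\QQ|X|^4+\EE^\QQ|u|^4)\,\drm s$.
\end{itemize}
The two growth conditions in Assumption~\ref{Ass:1}(3) are exactly what the Kunita inequality needs, so this term poses no real obstacle, only bookkeeping.

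\emph{Step 2: Gronwall.} Collecting the three bounds gives
\[
\EE^\QQ|X(t)|^4\le C\Bigl(1+|x|^4+\EE^\QQ\int_0^T|u|^4\,\drm s\Bigr)+C\int_0^t\EE^\QQ|X(s)|^4\,\drm s .
\]
Gronwall's inequality then yields \eqref{eq:X8}, with $C$ depending only on $T$, $x$, $k$, $k_\eta$ and the bounds in Assumption~\ref{Ass:1}(1).

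\emph{Step 3: justifying Gronwall.} The main technical point is that Gronwall requires $\sup_t\EE^\QQ|X(t)|^4<\infty$ a priori. Handle this by localization:
\begin{itemize}
\item Introduce $\tau_R=\inf\{t:|X(t)|\ge R\}\wedge T$.
\item Run Steps 1--2 for the stopped process $X(t\wedge\tau_R)$. The bound $|X(s-)|\le R$ holds before $\tau_R$, so all quantities are finite, and the constant obtained is independent of $R$.
\item Let $R\to\infty$ and apply Fatou's lemma, using $\tau_R\uparrow T$ a.s. since $X$ is c\`adl\`ag.
\end{itemize}
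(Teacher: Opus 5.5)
Your proof is correct and follows the same route as the paper. Both arguments split $X$ into initial value, drift, Brownian and compensated-jump parts, bound these by H\"older with linear growth, BDG, and the Kunita inequality using both growth conditions of Assumption~\ref{Ass:1}(3), and close with Gronwall. Your localization via $\tau_R$ and Fatou's lemma supplies a step the paper skips: it applies Gronwall directly without checking that $\sup_t\EE^\QQ|X(t)|^4$ is finite a priori, so your version is slightly more rigorous.
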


We next establish moment estimates for the variational processes associated with the state trajectory.
\begin{lemma}\label{lem:X-X}
	Let Assumption \ref{Ass:1} and \ref{Ass:4} hold. Then,  there exist a constant $C$ such that 
	\begin{align}
		\sup_{0\leq t\leq T}\EE^\QQ\lvert X^{\epsilon}(t)-X(t)\rvert^4\leq C \epsilon^4,\label{eq:X4}\\
		\sup_{0\leq t\leq T}\EE^\QQ \lvert M^{\epsilon}(t)-M(t)\rvert^2 \leq C \epsilon^2.\label{eq:M4}
	\end{align}
\end{lemma}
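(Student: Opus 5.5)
We prove \eqref{eq:X4} first and then use it to get \eqref{eq:M4}. Throughout we work under $\QQ$. By Assumption~\ref{Ass:2} and the Girsanov construction, $W$ and $\tilde N$ keep their $\PP$-laws under $\QQ$, and each $Y_k(t)-t$ is a $\QQ$-martingale. Write $\Delta X^\epsilon:=X^\epsilon-X$ and $\Delta M^\epsilon:=M^\epsilon-M$.

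\textbf{Step 1: state difference.} We have $u^\epsilon-u=\epsilon v$ with $v$ bounded. The coefficients $b,\sigma$ are $C^1$ in $(x,u)$ with bounded derivatives. Hence, by the mean value theorem,
\[
b^\epsilon(t)-b(t)=\bar b_x(t)\Delta X^\epsilon(t)+\epsilon\,\bar b_u(t)v(t),
\]
where $\bar b_x,\bar b_u$ are derivatives evaluated at intermediate points and are bounded. The same expansion holds for $\sigma$. For the jump term, $\eta^\epsilon-\eta=\int_0^1[\eta_x\Delta X^\epsilon+\epsilon\eta_u v]\drm\theta$. Assumption~\ref{Ass:1}(3) then gives, for $r=2,4$,
\[
\int_\cE|\eta^\epsilon-\eta|^r\nu(\drm\zeta)\le C(|\Delta X^\epsilon|^r+\epsilon^r).
\]
This uses Jensen in $\theta$ and the bounds $\int|\eta_x|^r\nu\le L$, $\int|\eta_u|^r\nu\le L$. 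Apply Itô's formula to $|\Delta X^\epsilon|^4$, or equivalently BDG plus Kunita's inequality for the compensated Poisson integral, which needs both the $\nu$-$L^2$ and $\nu$-$L^4$ bounds. This yields
\[
\EE^\QQ|\Delta X^\epsilon(t)|^4\le C\int_0^t\EE^\QQ|\Delta X^\epsilon(s)|^4\drm s+C\epsilon^4 .
\]
Gronwall's inequality gives \eqref{eq:X4}. The right-hand sides are finite by Lemma~\ref{lem:M4} and \eqref{eq:up}.

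\textbf{Step 2: likelihood-ratio difference.} From \eqref{eq:Qstate},
\[
\drm\Delta M^\epsilon(t)=\sum_{k}\Big[(\lambda_k^\epsilon(t)-1)\Delta M^\epsilon(t-)+(\lambda_k^\epsilon(t)-\lambda_k(t))M(t-)\Big]\drm(Y_k(t)-t).
\]
Under $\QQ$ the integrators are compensated unit Poisson martingales. The Itô isometry therefore gives
\[
\EE^\QQ|\Delta M^\epsilon(t)|^2\le 2\sum_k\EE^\QQ\!\int_0^t\Big[(\lambda_k^\epsilon-1)^2|\Delta M^\epsilon|^2+|\lambda_k^\epsilon-\lambda_k|^2M^2\Big]\drm s.
\]
By Assumption~\ref{Ass:4}, $|\lambda_k^\epsilon-1|\le K_2+1$. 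Moreover, $\lambda_k=a\,p_k$ is Lipschitz in $x$ uniformly in $t$, since $a$, $a'$, $p_k$, $p_k'$ are all bounded, so $|\lambda_k^\epsilon-\lambda_k|\le C|\Delta X^\epsilon(s-)|$.

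The coupling term is the main obstacle, because it mixes the state variation with the unbounded factor $M$. We handle it with Cauchy--Schwarz:
\[
\EE^\QQ[|\Delta X^\epsilon|^2M^2]\le(\EE^\QQ|\Delta X^\epsilon|^4)^{1/2}(\EE^\QQ M^4)^{1/2}\le C\epsilon^2 .
\]
Here the first factor is controlled by Step~1 and the second by Lemma~\ref{lem:moment-PQ} with $q=4$. This is precisely why Step~1 must be proved in $L^4$ rather than $L^2$.

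\textbf{Step 3: conclusion.} Combining the bounds,
\[
\EE^\QQ|\Delta M^\epsilon(t)|^2\le C\int_0^t\EE^\QQ|\Delta M^\epsilon(s)|^2\drm s+C\epsilon^2 .
\]
A second Gronwall step yields \eqref{eq:M4}. Finally, take the supremum over $t\in[0,T]$; the constant $C$ depends only on $T$, $\|v\|_\infty$ and the constants in Assumptions~\ref{Ass:1} and~\ref{Ass:4}.
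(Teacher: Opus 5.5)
Your proposal is correct and follows the paper's proof. The paper likewise proves an $L^4$ bound on the state difference by mean-value linearization, BDG/Kunita and Gronwall. It then applies the Itô isometry for the compensated counting martingales, uses Cauchy--Schwarz on the coupling term (which is why the $L^4$ estimate is needed), and closes with a second Gronwall step. The only cosmetic difference is your split of the integrand as $(\lambda_k^\epsilon-1)\Delta M^\epsilon+(\lambda_k^\epsilon-\lambda_k)M$. This puts the unperturbed $M$ in the coupling term, whereas the paper's split $(\lambda_k^\epsilon-\lambda_k)M^\epsilon+(\lambda_k-1)\Delta M^\epsilon$ requires a fourth-moment bound on $M^\epsilon$ uniform in $\epsilon$.
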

\begin{proof}
	For simplicity, we denote $\xi^\epsilon(t)=X^\epsilon(t)-X(t).$
	Note that  $
	\drm \xi^\epsilon(t)=\left[b^\epsilon(t)-b(t)\right]\drm t+\left[ \sigma^\epsilon(t)-\sigma(t)\right]\drm W(t)+\int_{\mathcal{E}}\left[\eta^\epsilon(t ,\zeta)-\eta(t,\zeta)\right]\tilde{N}(\drm \zeta,\drm t),$
	and $\xi^\epsilon(0)=0.$  Then, by the B-D-G inequality, the Kunita inequality, the Young's inequality, and the Cauchy-Schwarz  inequality,  we have that
	\begin{small}
		\begin{equation*} 
			\begin{split}
				&\EE^\QQ \lvert \xi^\epsilon(t) \rvert^4\\
				&=\EE^\QQ \bigg\{\Big\lvert\int_0^t \left[b^\ep(s )-b(s )\right]\drm s+\left[ \sigma^\ep(s )-\sigma(s )\right]\drm W(s)+\int_0^t\int_{\mathcal{E}}\left[\eta^\ep(s ,\zeta)-\eta(s,\zeta)\right]\tilde{N}(\drm \zeta,\drm s)\Big\rvert^4\bigg\}\\
				&\leq 27 \EE^\QQ \bigg\{\left\lvert\int_0^t b^\ep(s)-b(s)\drm s\right\rvert^4+\left\lvert \int_0^t  \sigma^\ep(s)-\sigma(s)\drm W(s)\right\rvert^4+\left\lvert\int_0^t\int_{\mathcal{E}}\left[\eta^\ep(s,\zeta)-\eta(s,\zeta)\right]\tilde{N}(\drm \zeta,\drm s)\right\rvert^4\bigg\}\\
			\end{split}
		\end{equation*}
	\end{small}
	which means
	\begin{small}
		\begin{equation}\label{eq:esxi8}
			\begin{split}
				&\EE^\QQ \lvert \xi^\epsilon(t) \rvert^4\\
				&\leq C^1 \EE^\QQ\bigg\{ \int_0^t\left\lvert b_x^\rho(s)\xi^\epsilon(s)+b_u^\rho(s)\epsilon v(s)\right\rvert^4\drm s+  \int_0^t \left\lvert \sigma_x^\rho(s)\xi^\epsilon(s)+\sigma_u^\rho(s)\epsilon v(s)\right\rvert^4 \drm s\bigg\}\\
				&\quad+C^1\EE^\QQ\left\lvert\int_0^t\int_{\cE}\left\lvert\eta_x^\rho(s,\zeta)\xi^\epsilon(s-)+\eta_u^\rho(s,\zeta)\epsilon v(s-)\right\rvert^2\nu(\drm \zeta)\drm s\right\rvert^2\\
				&\quad+27C_D \EE^\QQ\int_0^t \int_{\cE}\left\lvert\eta_x^\rho(s,\zeta)\xi^\epsilon(s-)+\eta_u^\rho(s,\zeta)\epsilon v(s-)\right\rvert^4 \nu(\drm \zeta)\drm s,
			\end{split}	
		\end{equation}	
	\end{small}
	with 
	\begin{equation}\label{eq:mth}
		\begin{aligned}
			b_x^\rho(s)&=\int_0^1 b_x(s,X(s)+\rho\xi^\ep(s),u(s)+\rho\epsilon v(s))\drm \rho,\quad
			b_u^\rho(s)=\int_0^1 b_u(s,X(s),u(s)+\rho\epsilon v(s))\drm \rho,\\
			\sigma_x^\rho(s)&=\int_0^1 \sigma_x(s,X(s)+\rho\xi^\ep(s),u(s)+\rho\epsilon v(s))\drm \rho, \quad
			\sigma_u^\rho(s)=\int_0^1 \sigma_u(s,X(s),u(s)+\rho\epsilon v(s))\drm \rho,\\
			\eta_x^\rho(s,\zeta)&=\int_0^1 \eta_x(s,X(s-)+\rho\xi^\ep(s-),u(s-)+\rho\epsilon v(s-),\zeta)\drm \rho,\\
			\eta_u^\rho(s,\zeta)&=\int_0^1 \eta_u(s,X(s-) ,u(s-)+\rho\epsilon v(s-),\zeta)\drm \rho,\\
		\end{aligned}
	\end{equation} 
	and \(C^1=\max\{27T^3,27C_G^1T ,27C_D^1\}\).
	
	Since $b_x$ and $b_u$ are uniformly bounded by Assumption 1,   applying the elementary inequality $(a+b)^4 \le 2^3(a^4+b^4)$ yields $ \EE^\QQ\Big\{ \int_0^t\left\lvert b_x^\rho(s)\xi^\epsilon(s)+b_u^\rho(s)\epsilon v(s)\right\rvert^4\drm s+  \int_0^t \left\lvert \sigma_x^\rho(s)\xi^\epsilon(s)+\sigma_u^\rho(s)\epsilon v(s)\right\rvert^4\\ \drm s\Big\} \le C_2^1 \EE^\QQ \int_0^t|\xi^\epsilon(s)|^4\drm s + C_2^1\epsilon^4\EE^\QQ\int_0^t|v(s)|^4\drm s$,
	where $C_2^1$ is a positive constant depending on $C_1$ and the uniform bounds of $b_x^\rho, b_u^\rho, \sigma_x^\rho, \sigma_u^\rho $.
	
	Moreover, since also based on Assumption \ref{Ass:1}, i.e.$\int_{\cE}|\eta_x|^2\nu(\drm \zeta)$, $\int_{\cE}|\eta_u|^2\nu(\drm \zeta)$, $\int_{\cE}|\eta_x|^4\nu(\drm \zeta)$ and $\int_{\cE}|\eta_u|^4\nu(\drm \zeta)$ are uniformly  bounded, then we have that  $$
	\begin{aligned} &C^1\EE^\QQ\left\lvert\int_0^t\int_{\cE}\left\lvert\eta_x^\rho(s,\zeta)\xi^\epsilon(s-)+\eta_u^\rho(s,\zeta)\epsilon v(s-)\right\rvert^2\nu(\drm \zeta)\drm s\right\rvert^2 \\
		&+27C_D \EE^\QQ\int_0^t \int_{\cE}\left\lvert\eta_x^\rho(s,\zeta)\xi^\epsilon(s-)+\eta_u^\rho(s,\zeta)\epsilon v(s-)\right\rvert^4 \nu(\drm \zeta)\drm s \\
		&\le  C_2^2 \EE^\QQ \int_0^t|\xi^\epsilon(s-)|^4\drm s + C_2^2\epsilon^4\EE^\QQ\int_0^t|v(s-)|^4\drm s,
	\end{aligned}$$
	where $C_2^2$ is a positive constant depending on $C_1$ and the uniform bounds of $\int_{\cE}|\eta_x|^2\nu(\drm \zeta)$, $\int_{\cE}|\eta_u|^2\nu(\drm \zeta)$, $\int_{\cE}|\eta_x|^4\nu(\drm \zeta)$ and $\int_{\cE}|\eta_u|^4\nu(\drm \zeta)$.
	
	Moreover, in the above derivation, we used the fact that for any c\'{a}dl\'{a}g (or predictable) process $Z$, the set of jump times $\{s : Z(s-) \ne Z(s)\}$ is at most countable, and thus has Lebesgue measure zero. Consequently, $\int_0^t |Z(s-)|^p \drm s = \int_0^t |Z(s)|^p \drm s$ a.s. for any $p \ge 1$. This property is applied here to $\xi^\epsilon$ and $v$.
	Therefore, we have  $
	\EE^\QQ|\xi^\ep(t)|^4\le C_2 \EE^\QQ \int_0^t|\xi^\epsilon(s)|^4\drm s + C_2 \epsilon^4\EE^\QQ\int_0^t|v(s)|^4\drm s.$
	By using Gronwall's inequality  to $\EE^\QQ|\xi^\epsilon(t)|^4$, we obtain $
	\EE^\QQ|\xi^\epsilon(t)|^4 \le C_2\epsilon^4 \Big(\EE^\QQ \int_0^T |v(s)|^4\drm s\Big) e^{C_2T}$. 
	
	Since $v(\cdot)$ is bounded, $\EE^\QQ\int_0^T|v(s)|^4\drm s
	\le T\lVert v\rVert_\infty^4<\infty$, so there exists a constant $\kappa$ such that 
	\begin{align}\label{eq:xi8}
		\EE^\QQ |\xi^\epsilon(t)|^4 \leq \kappa \epsilon^4,
	\end{align}
	which completes the proof of inequality \eqref{eq:X4}.
	
	We now prove inequality \eqref{eq:M4}. For notational simplicity, define $\bar{M}^\epsilon(t) = M^\epsilon(t) - M(t)$. Then,
	$\drm \bar{M}^\epsilon(t) =\sum_{k=1}^n \Big[ \lambda_k^\epsilon(t-)M^\epsilon(t-)-\lambda_k(t-)M(t-)+M(t-) -M^\epsilon(t-)\Big]\\ \drm ( Y_k(t)-t),$
	and $\bar{M}_0^\epsilon=0$. Thus,
	\begin{align*}
		&\EE^\QQ   \lvert \bar{M}^\epsilon(t)\rvert^2 \\
		&\leq \EE^\QQ\left\lvert \int_0^t \sum_{k=1}^n \left[ \lambda_k^\epsilon M^\epsilon -\lambda_k M +M -M^\epsilon \right]\drm ( Y_k(s)-s)\right\rvert^2\\
		&\leq 2\sum_{k=1}^n \EE^\QQ \left\lvert \int_0^t \left(\lambda_k^\epsilon -\lambda_k \right)M^\epsilon \drm \left(Y_k(s)-s\right)\right\rvert^2 + 2\sum_{k=1}^n \EE^\QQ \left\lvert\int_0^t \left(\lambda_k -1\right)\left(M^\epsilon -M \right)\drm ( Y_k(s)-s)\right\rvert^2\\
		&\leq 2\sum_{k=1}^n \EE^\QQ \left\lvert \int_0^t \lambda^\prime_k(\theta_3 ,s-)\xi^\epsilon M^\epsilon \drm \left(Y_k(s)-s\right)\right\rvert^2 +2 K_2\EE^\QQ \left\lvert\int_0^t \left(M^\epsilon -M \right)\drm ( Y_k(s)-s)\right\rvert^2\\
		&\leq  C^3 \EE^\QQ \int_0^t \lvert \xi^\epsilon \rvert^2 \lvert M^\epsilon \vert^2\drm s+C^3  \EE^\QQ \int_0^t \lvert \bar{M}^\epsilon  \rvert^2 \drm s ,
	\end{align*}
	where $C^3$ is a positive constant depending on $k_2$ and the  bound on $\lambda^\prime_k$ from Assumption \ref{Ass:4}.
	
	For the first term, by applying the Cauchy--Schwarz inequality, we have $\EE^\QQ[|\xi^\epsilon|^2|M^\epsilon|^2] 
	\le\big(\EE^\QQ|\xi^\epsilon|^4\big)^{1/2}\big(\EE^\QQ|M^\epsilon|^4\big)^{1/2}$.  Based on  \eqref{eq:xi8}, we have $\EE^\QQ|\xi^\epsilon|^4 \le \kappa\epsilon^4$. By Lemma \ref{lem:moment-PQ} applied with the perturbed control $u^\epsilon$ in place of $u$, we have  $
	\sup_{\epsilon\in(0,1]}\sup_{0\le t\le T}\EE^\QQ|M^\epsilon(t)|^4 < \infty. $
	Therefore, $ \EE^\QQ\int_0^t |\xi^\epsilon(s-)|^2|M^\epsilon(s-)|^2\drm s \le \int_0^t (\EE^\QQ|\xi^\epsilon(s)|^4)^{1/2}(\EE^\QQ|M^\epsilon(s)|^4)^{1/2}\drm s \le C^4 \epsilon^2$.  Here $C^4$ is a positive constant which is independent of $t$.
	
	Substituting into the estimate for $\EE^\QQ|\bar M^\epsilon|^2$, we have
	$
	\EE^\QQ|\bar M^\epsilon(t)|^2 \le C \epsilon^2 + C \int_0^t \EE^\QQ|\bar M^\epsilon(s)|^2\drm s.
	$
	By using Gronwall's inequality, there exists a constant $C>0$ such that
	$\sup_{0\le t\le T}\EE^\QQ|\bar M^\epsilon(t)|^2 
	\le C\epsilon^2$,
	which establishes \eqref{eq:M4}.
\end{proof}

We now define the following variational processes
\begin{align*}
	X^l(t)&\triangleq \lim_{\epsilon\to 0}\frac{X^\epsilon(t)-X(t)}{\epsilon},\\
	M^l(t)&\triangleq\lim_{\epsilon\to 0}\frac{M^\epsilon(t)-M(t)}{\epsilon}.
\end{align*}
Then,
\begin{equation}\label{eq:Psi}
	\left\{\begin{aligned}
		\drm X^l(t)=&\left[b_x X^l +b_u v \right]\drm t+\left[\sigma_x X^l +\sigma_u v \right]\drm W(t)\\
		&+\int_{\cE}\left[\eta_x(t,\zeta)X^l(t-)+\eta_u(t,\zeta)v(t-)\right]\tilde{N}(\drm \zeta,\drm t),\\
		X^l(0)=&0,
	\end{aligned}
	\right.
\end{equation}
and
\begin{equation}\label{eq:varphi}
	\left\{\begin{aligned}
		\drm M^l(t)=&\sum_{k=1}^n\big\{\left(\lambda_k(t-)-1\right)M^l(t-) + \lambda_k^\prime(t-)M(t-)X^l(t-)\big\}\drm \left( Y_k(t)-t\right),\\
		M^l(0)=&0.
	\end{aligned}
	\right.
\end{equation}
Based on Assumption \ref{Ass:1} and  \ref{Ass:4}, the above SDE \eqref{eq:Psi} and \eqref{eq:varphi} admit a unique solution respectively.

Now, we present the following lemma to establish the estimation of $X^l$ and $M^l$,the proof of which is detailed in Appendix \ref{App:1}.
\begin{lemma}\label{lem:XL}
	Under Assumptions \ref{Ass:1} to \ref{Ass:4}, the solution $(X^l, M^l)$ of the linear SDE system \eqref{eq:Psi} and \eqref{eq:varphi} respectively satisfies 
	\[\sup_{0\le t\le T}\EE^\QQ|X^l(t)|^4 < \infty, \qquad 
	\sup_{0\le t\le T}\EE^\QQ|M^l(t)|^2 < \infty.
	\]
\end{lemma}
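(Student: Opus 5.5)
We first bound $X^l$ and then $M^l$, using the same tools as in the proof of Lemma~\ref{lem:X-X}.

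\emph{Step 1: the fourth moment of $X^l$.} Equation \eqref{eq:Psi} is a linear SDE whose coefficients $b_x,b_u,\sigma_x,\sigma_u$ are uniformly bounded (Assumption~\ref{Ass:1}). The $\nu$-integrals of $|\eta_x|^2,|\eta_u|^2,|\eta_x|^4,|\eta_u|^4$ are also uniformly bounded. We split $\EE^\QQ|X^l(t)|^4$ into its drift, Brownian and compensated-Poisson parts with the factor $27$. The drift part is handled by Hölder, the Brownian part by BDG, and the jump part by Kunita's inequality, which produces both the squared $\nu$-integral term and the fourth-power $\nu$-integral term. Together with $(a+b)^4\le 8(a^4+b^4)$, this yields
\[
\EE^\QQ|X^l(t)|^4\le C\int_0^t\EE^\QQ|X^l(s)|^4\drm s + C\,\EE^\QQ\int_0^T|v(s)|^4\drm s .
\]
For the term $\EE^\QQ(\int_0^t\int_\cE|\eta_x X^l|^2\nu\,\drm s)^2$ we pull $\sup\int_\cE|\eta_x|^2\drm\nu$ out and then use Cauchy--Schwarz in time. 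We also replace left limits by values inside Lebesgue integrals, as in the proof of Lemma~\ref{lem:X-X}. Since $v$ is bounded, Gronwall's inequality gives $\sup_t\EE^\QQ|X^l(t)|^4\le C$. Before applying Gronwall, a localization (stopping at $\tau_R=\inf\{t:|X^l(t)|>R\}$ and letting $R\to\infty$ by Fatou) ensures that the quantity involved is finite.

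\emph{Step 2: the second moment of $M^l$.} Under $\QQ$, each $Y_k(t)-t$ is a compensated unit Poisson martingale, so we may use the isometry $\EE^\QQ|\int_0^t H\,\drm(Y_k-s)|^2=\EE^\QQ\int_0^t|H(s)|^2\drm s$ for predictable $H$. Applying it to \eqref{eq:varphi} with $(a+b)^2\le 2a^2+2b^2$ gives
\[
\EE^\QQ|M^l(t)|^2\le C\int_0^t\EE^\QQ|M^l(s)|^2\drm s + C\int_0^t\EE^\QQ\bigl[|\lambda_k'(s)|^2|M(s)|^2|X^l(s)|^2\bigr]\drm s .
\]
Here we used that $|\lambda_k-1|\le K_2+1$. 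We also used that $\lambda_k'=a'p_k+ap_k'$ is bounded by Assumption~\ref{Ass:4}, noting that the cross terms across $k$ vanish because the $Y_k$ have no common jumps.

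\emph{Step 3: the coupling term.} This is the step where the state-dependent intensity enters, and the only point needing care. By Cauchy--Schwarz,
\[
\EE^\QQ\bigl[|M(s)|^2|X^l(s)|^2\bigr]\le(\EE^\QQ|M(s)|^4)^{1/2}(\EE^\QQ|X^l(s)|^4)^{1/2}.
\]
The first factor is bounded uniformly in $s$ by Lemma~\ref{lem:moment-PQ} with $q=4$, and the second by Step~1. This is precisely why Step~1 needs fourth rather than second moments of $X^l$. Gronwall's inequality, again after a localization to guarantee finiteness, then gives $\sup_t\EE^\QQ|M^l(t)|^2<\infty$. No obstacle beyond this Hölder pairing is expected, since both equations are linear with bounded coefficients.
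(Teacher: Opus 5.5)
Your proposal is correct and follows essentially the same route as the paper: a Gronwall estimate for $\EE^\QQ|X^l|^4$ from the bounded linear coefficients, then a Gronwall estimate for $\EE^\QQ|M^l|^2$ in which the coupling term $\lambda_k' M X^l$ is controlled by the Cauchy--Schwarz pairing $(\EE^\QQ|M|^4)^{1/2}(\EE^\QQ|X^l|^4)^{1/2}$ via Lemma~\ref{lem:moment-PQ}. The differences are cosmetic: you bound the integral form directly with BDG/Kunita and the compensated-Poisson isometry, whereas the paper applies It\^o's formula to $|X^l|^4$ and $|M^l|^2$; your localization before each Gronwall step also supplies a finiteness justification that the paper leaves implicit.
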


To derive the stochastic maximum principle, for \(t\in[0,T]\) and \(\epsilon>0\), we define  
\[
\tilde{\varrho}(t) = \frac{\varrho^\epsilon(t) - \varrho(t)}{\epsilon} - \varrho^l(t), \quad \text{for} \quad \varrho = X, M.
\]
The following lemma establishes the convergence of $\tilde{\varrho}(t)$ for both $\varrho = X$ and $\varrho = M$. A detailed proof is provided in Appendix \ref{App:1}.
\begin{lemma}\label{lem:LXM}
	Let Assumption \ref{Ass:1} to \ref{Ass:4} hold. Then,   
	\begin{align}
		&\lim_{\epsilon\to 0}\sup_{0\leq t\leq T} \EE^\QQ\lvert \tilde{X}(t)\rvert^4 =0,\label{eq:X0}\\
		&	\lim_{\epsilon\to 0}\sup_{0\leq t\leq T}  \EE^\QQ\lvert \tilde{M}(t)\rvert^2 =0.\label{eq:M0}
	\end{align}
\end{lemma}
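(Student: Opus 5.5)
We prove the two statements in turn, first \eqref{eq:X0} and then \eqref{eq:M0}. The second uses the first, exactly as in the proof of Lemma~\ref{lem:X-X}.

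\emph{The $X$-part.} Write $\xi^\epsilon=X^\epsilon-X$ and use the mean-value coefficients $b_x^\rho,b_u^\rho,\sigma_x^\rho,\dots$ from \eqref{eq:mth}. Subtracting \eqref{eq:Psi} from the equation for $\xi^\epsilon/\epsilon$ gives a linear equation for $\tilde X$:
\begin{align*}
\drm\tilde X(t)=&\big[b_x^\rho\tilde X+(b_x^\rho-b_x)X^l+(b_u^\rho-b_u)v\big]\drm t+\big[\sigma_x^\rho\tilde X+(\sigma_x^\rho-\sigma_x)X^l+(\sigma_u^\rho-\sigma_u)v\big]\drm W(t)\\
&+\int_\cE\big[\eta_x^\rho\tilde X(t-)+(\eta_x^\rho-\eta_x)X^l(t-)+(\eta_u^\rho-\eta_u)v(t-)\big]\tilde N(\drm\zeta,\drm t),
\end{align*}
with $\tilde X(0)=0$. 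We estimate the fourth moment with the same B-D-G, Kunita and Gronwall chain used for \eqref{eq:X4}. This yields
\[
\sup_t\EE^\QQ|\tilde X(t)|^4\le C\,R^\epsilon,
\]
where $R^\epsilon$ collects the remainder terms. For example, it contains $\EE^\QQ\int_0^T|b_x^\rho-b_x|^4|X^l|^4\drm s$ and $\EE^\QQ\int_0^T\big(\int_\cE|\eta_x^\rho-\eta_x|^2\nu(\drm\zeta)\big)^2|X^l|^4\drm s$, the analogous $\int_\cE|\cdot|^4\nu$ term, and the corresponding terms with $v$ in place of $X^l$.

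To show $R^\epsilon\to0$ we argue along subsequences. By \eqref{eq:X4}, $\xi^\epsilon\to0$ in $L^4(\QQ\otimes\drm t)$, so along a subsequence $\xi^\epsilon\to0$ almost everywhere. Together with $\epsilon v\to0$, continuity of the derivatives gives $b_x^\rho\to b_x$ and the analogous limits almost everywhere. The drift and diffusion remainders then vanish by dominated convergence with respect to $\QQ\otimes\drm t$. This is possible because the derivatives are bounded, $v$ is bounded, and $\sup_t\EE^\QQ|X^l|^4<\infty$ by Lemma~\ref{lem:XL}.

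The jump remainders are the delicate point. Assumption~\ref{Ass:1} only controls $\int_\cE|\eta_x|^p\nu$, so we apply dominated convergence on the product space $\Omega\times[0,T]\times\cE$ with $\QQ\otimes\drm t\otimes\nu$. We first bound $(\int_\cE|\eta_x^\rho-\eta_x|^2\nu)^2\le C\int_\cE|\eta_x^\rho-\eta_x|^2\nu$ using the uniform bound $L$. The integrands then converge to zero pointwise by continuity of $\eta_x$ in $(x,u)$. A dominating function is needed. Since the generalized dominated convergence theorem applies to integrands dominated by a convergent sequence, we use the majorant $|\eta_x^\rho|^p+|\eta_x|^p$: its $\nu$-integral converges by a uniform-integrability (Vitali-type) argument. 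If that step is not available, we assume, as is customary, that $\eta_x,\eta_u$ are dominated by a $\nu$-integrable function of $\zeta$ alone; this is what is needed to make the standard argument rigorous.

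Because every subsequence has a further subsequence along which $R^\epsilon\to0$, the full limit holds, which proves \eqref{eq:X0}.

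\emph{The $M$-part.} Using $\frac{\lambda_k^\epsilon-\lambda_k}{\epsilon}=\lambda_k'^{\,\rho}\frac{\xi^\epsilon}{\epsilon}$, we expand
\[
\frac{\lambda_k^\epsilon M^\epsilon-\lambda_kM}{\epsilon}=\lambda_k\frac{M^\epsilon-M}{\epsilon}+\lambda_k'^{\,\rho}\frac{\xi^\epsilon}{\epsilon}M^\epsilon .
\]
Subtracting \eqref{eq:varphi} gives
\[
\drm\tilde M=\sum_k\Big\{(\lambda_k-1)\tilde M+\lambda_k'^{\,\rho}\tilde X M^\epsilon+\lambda_k'^{\,\rho}X^l(M^\epsilon-M)+(\lambda_k'^{\,\rho}-\lambda_k')X^lM\Big\}\drm(Y_k-t).
\]
The compensated Poisson isometry and Gronwall give $\sup_t\EE^\QQ|\tilde M|^2\le C\int_0^T(\cdots)\drm s$, and we show each of the three forcing terms tends to zero:
\begin{itemize}
\item the first, by Cauchy--Schwarz together with \eqref{eq:X0} and the uniform $L^4$ bound on $M^\epsilon$ from Lemma~\ref{lem:moment-PQ};
\item the second, by H\"older's inequality with $\EE|X^l|^4<\infty$ and $M^\epsilon-M\to0$ in $L^2$ from \eqref{eq:M4}; here a higher-moment version such as $\EE|M^\epsilon-M|^4\to0$ is needed, which follows from the $L^2$ convergence and uniform $L^8$ bounds by interpolation;
\item the third, by dominated convergence, since $\lambda_k'$ is bounded and continuous and the factor $|X^l|^2M^2$ is integrable.
\end{itemize}
This proves \eqref{eq:M0}.

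The main obstacle is the dominated-convergence step for the $\nu$-integrated jump remainders, since Assumption~\ref{Ass:1} gives integral bounds rather than a pointwise majorant.
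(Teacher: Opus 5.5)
Your treatment of \eqref{eq:X0} follows the paper's proof. You use the same linear equation for $\tilde X$, the same B-D-G/Kunita/Gronwall chain, and remove the remainders by dominated convergence. Your subsequence/a.e.\ device is equivalent to the paper's ``convergence in probability, then DCT in $\omega$ and then in $s$''.

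You are right that the jump remainders are the weak point, but the Vitali-type justification does not close. The bounds on $\int_\cE|\eta_x|^p\nu(\drm\zeta)$, $p=2,4$, in Assumption~\ref{Ass:1} give neither tightness on the $\sigma$-finite space $\cE$ nor uniform integrability of $|\eta_x|^4$. For instance, take a bump in $\zeta$ of fixed $L^2(\nu)$- and $L^4(\nu)$-norm that escapes into a region of infinite $\nu$-mass as $x\to x_0$. It is continuous in $x$ for every $\zeta$ but not continuous in $L^p(\nu)$.

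So an additional hypothesis is genuinely needed. The paper uses exactly the one your generalized-DCT majorant requires. It asserts that $(x,u)\mapsto\int_\cE|\eta_x(t,x,u,\zeta)|^p\nu(\drm\zeta)$ is continuous ``by Assumption~\ref{Ass:1}'', although this is not actually stated there. It then deduces $\int_\cE|\eta_x^\rho-\eta_x|^p\nu(\drm\zeta)\to0$ by a Scheff\'e-type argument. You should state one such condition explicitly. If you use a majorant, it must be $|\eta_x|+|\eta_u|\le g(\zeta)$ with $g\in L^2(\nu)\cap L^4(\nu)$, not merely $g\in L^1(\nu)$, because Kunita's inequality produces both the $|\cdot|^2$ and the $|\cdot|^4$ terms.

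For \eqref{eq:M0} you split the increment differently. The paper writes it as $(\lambda_k^\epsilon-1)\tilde M+\lambda'_{k,\rho}\tilde X M+(\lambda'_{k,\rho}-\lambda'_k)X^l M$ plus a term $(\lambda_k^\epsilon-\lambda_k)M^l$, printed with a harmless sign slip. Its forcing terms therefore need only the $L^2$ bound on $M^l$ from Lemma~\ref{lem:XL}, boundedness of $\lambda_k$, and dominated convergence.

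Your splitting is also correct and keeps the $\epsilon$-free coefficient $\lambda_k-1$ on $\tilde M$. However, it pairs $X^l$, known only in $L^4$, with $M^\epsilon-M$, which forces you to upgrade \eqref{eq:M4} to $\EE^\QQ|M^\epsilon-M|^4\to0$. Your interpolation does this legitimately, since the bounds of Lemma~\ref{lem:moment-PQ} depend only on $q,\delta_\lambda,K_2,n$ and hence hold uniformly for $M^\epsilon$. The paper's split is marginally cheaper; yours costs one interpolation step. Both close with the same isometry-plus-Gronwall argument.
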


For notational simplicity, we define \(J(\ep)\) as \(J(u(\cdot)+\ep v(\cdot))\). We now define the following Fr\'{e}chet derivative.
\begin{align*}
	\frac{\drm}{\drm \ep}J (\ep)\left|_{\epsilon=0}\right.=\lim_{\ep\to 0}\frac{J(u(\cdot)+\ep v(\cdot))-J(u(\cdot))}{\ep}.
\end{align*}
Since \(u(\cdot)\) is an optimal control, it follows that $0 \leq \left.\frac{\drm}{\drm \ep}J (\ep)\right|_{\epsilon=0}$. Moreover, for sufficiently small \( \varepsilon>0 \), both perturbed controls \( u+\varepsilon v \) and \( u-\varepsilon v \) lie within the set of admissible controls. Applying the above inequality to the direction \( v \) and \( -v \) separately yields the stationarity condition $0 = \left.\frac{\drm}{\drm\epsilon} J(\epsilon)\right|_{\epsilon=0}.$

Substituting the expansions $X^\epsilon = X + \epsilon X^l + \epsilon\tilde{X}$ and $M^\epsilon = M + \epsilon M^l + \epsilon\tilde{M}$ into the cost functional, we expand $\Phi(X^\epsilon)M^\epsilon$ and $f(t,X^\epsilon,u^\epsilon)M^\epsilon$ to first order. By applying the convergence results of Lemmas~\ref{lem:X-X} and~\ref{lem:LXM} to verify that the remainder terms vanish after division by $\epsilon$, we obtain the stationary condition
\begin{equation}\label{eq:FD}
	\begin{aligned}
		0=&   \frac{\drm}{\drm \ep} J(\ep)\left|_{\epsilon=0}\right.=\lim_{\ep\to 0}\frac{J(u(\cdot)+\ep v(\cdot))-J(u(\cdot))}{\ep}\\
		=&\EE^\QQ\left[\Phi(X(T))M^l(T)+\Phi^\prime(X(T))M(T)X^l(T)\right] +\EE^\QQ\Big[\int_0^T f(t,X(t),u(t))M^l(t)\\
		&+M(t)\left(f_x\left(t,X(t),u(t)\right)X^l(t)+f_u\left(t,X(t),u(t)\right)v(t)\right)\drm t\Big].
	\end{aligned}
\end{equation}
The integrability of each term in \eqref{eq:FD} is   guaranteed by the generalized H\"{o}lder's inequality. Specifically, combining the polynomial growth conditions on $f$ and $\Phi$ given in Assumption \ref{Ass:1}  with the higher-order moment estimates established in  Lemmas \ref{lem:moment-PQ}, \ref{lem:M4}, and \ref{lem:XL} for the admissible control $u\in\mathcal U_{ad}$ and the bounded variation $v\in\mathcal V(u)$, all expectations are well defined and finite.

We define the Hamiltonian function \(H\) by $
H(t,u,x,m,\alpha,\beta,\gamma^1,\cdots,\gamma^n,\theta)= b(x,u)\alpha+
\sigma(x,u)\beta+\sum_{k=1}^n \gamma^k\left(\lambda_k(x,t)-1\right)m 
+\int_{\mathcal{E}}\eta(t,x,u,\zeta)\theta(t,\zeta)\nu(\drm \zeta)+m f(t,x,u).$

Then, we formulate the adjoint equations as follows
\begin{equation}\label{eq:alpha}
	\left\{
	\begin{aligned}
		\drm \alpha(t)&=-\Big[b_x(t)\alpha(t)+\sigma_x(t)\beta(t)
		+\int_{\cE}\eta_x(t,\zeta)\theta(\zeta)\nu(\drm\zeta) +\sum_{k=1}^n\gamma^k(t)\lambda^\prime_k(t-)M(t-)
		+M(t)f_x(t)\Big]\drm t\\
		&\qquad+\beta(t)\drm W(t)
		+\int_{\cE}\theta(t-,\zeta)\tilde{N}(\drm\zeta,\drm t) +\sum_{k=1}^n\xi^k(t)\drm(Y_k(t)-t),\\
		\alpha(T) &=\Phi^\prime(X(T))M(T).
	\end{aligned}
	\right. 
\end{equation}
and
\begin{equation}\label{eq:gamma}
	\left\{
	\begin{aligned}
		\drm\bar{\gamma}(t)&=-\left[\sum_{k=1}^{n}\gamma^k(t)
		\left(\lambda_k(t-)-1\right)+f(t)\right]\drm t +\bar{\beta}(t)\drm W(t)
		+\int_{\cE}\bar{\theta}(t-,\zeta)\tilde{N}(\drm\zeta,\drm t)\\
		&\qquad+\sum_{k=1}^{n}\gamma^k(t)\drm(Y_k(t)-t),\\
		\bar{\gamma}(T)&=\Phi(X(T)).
	\end{aligned}
	\right.
\end{equation}
Based on Lemma~\ref{lem:moment-PQ} and Lemma~\ref{lem:M4}, the
terminal conditions $ \alpha(T)=\Phi'(X(T))M(T)$, and $\bar{\gamma}(T)=\Phi(X(T))$ are square-integrable. Under the augmented filtration $\hat{\mathcal{F}}_t=\mathcal{F}^W_t\vee\mathcal{F}^{\widetilde{N}}_t\vee\mathcal{F}^{\vec{Y}}_t\vee\cN$, the adjoint martingales admit representations along the directions generated by $W$, $\widetilde{N}$, and the compensated observation processes $Y_k(t)-t$. Thus the equation of $\alpha$ contains integrands $\{\beta,\theta,\xi^k\}$, while the equation of $\bar\gamma$ contains $\{\bar\beta,\bar\theta,\gamma^k\}$. In the duality identities, $\{\beta,\theta\}$ contribute through the Itô product formula for $\drm (\alpha X^l)$, while $\gamma^k$ contribute through that for $\drm (\bar\gamma M^l)$. The remaining components $\xi^k$ and $\{\bar\beta,\bar\theta\}$ are orthogonal under $\mathbb{Q}$ and hence do not enter the stationarity condition. Moreover, the adjoint system is a triangular system. Since equation \eqref{eq:gamma} can be solved first for
\((\bar\gamma,\bar\beta,\bar\theta,\gamma^1,\ldots,\gamma^n)\), and the resulting processes \(\gamma^k\) then enter the linear equation
\eqref{eq:alpha} for \((\alpha,\beta,\theta,\xi^1,\ldots,\xi^n)\).

The main result of this paper is the following theorem, which establishes a necessary condition for an admissible control to be optimal. 
\begin{theorem}\label{th:1}
	Suppose Assumption \ref{Ass:1} to \ref{Ass:4} hold. Let $u(\cdot)$ be an interior optimal admissible control with corresponding state $(X, M)$, and Let $ (\alpha,\beta,\theta,\xi^1,\ldots,\xi^n)$ and  $
	(\bar\gamma,\bar\beta,\bar\theta,\gamma^1,\ldots,\gamma^n)$
	be the unique adapted solutions of the BSDEs \eqref{eq:alpha}
	and \eqref{eq:gamma}, respectively. Then, for $\drm t\otimes\drm\PP$-a.e. 
	$(t,\omega)\in[0,T]\times\Omega$,
	$$
	\EE^\QQ\Big[\partial_u H\big(t,u(t), X(t),  M(t), \alpha(t), 
	\beta(t), \gamma^1(t),\ldots,\gamma^n(t),  \theta(t,\cdot)\big) 
	\Big| \cF^{\overrightarrow{Y}}_t\Big] = 0.
	$$
\end{theorem}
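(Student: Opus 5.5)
The plan is a duality argument: rewrite the stationarity identity \eqref{eq:FD} as $\EE^\QQ\int_0^T \partial_u H(t)\,v(t)\,\drm t=0$, and then exploit the arbitrariness of the $\cF^{\overrightarrow{Y}}$-predictable direction $v$. Fix a bounded $\cF^{\overrightarrow{Y}}$-predictable $v$. Since $u$ is interior, \eqref{eq:FD} holds for this $v$.

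\textbf{Step 1: the term $\Phi(X(T))M^l(T)$.} Apply the Itô product formula to $\bar\gamma(t)M^l(t)$, using \eqref{eq:gamma} and \eqref{eq:varphi}. Under $\QQ$, $Y_k(t)-t$ are compensated unit Poisson martingales independent of $W$ and $\tilde N$. Hence:
\begin{itemize}
\item the only nonzero brackets between the two equations come from $\drm[\,Y_k\,]=\drm Y_k$;
\item after compensation these contribute $\gamma^k\{(\lambda_k-1)M^l+\lambda_k' M X^l\}\,\drm t$ to the drift;
\item the $\bar\beta,\bar\theta$ parts have zero bracket with $M^l$.
\end{itemize}
The term $-\gamma^k(\lambda_k-1)M^l$ from the $\bar\gamma$ drift cancels the bracket contribution $\gamma^k(\lambda_k-1)M^l$. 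This gives
\[
\EE^\QQ[\Phi(X(T))M^l(T)]=\EE^\QQ\int_0^T\Big[\sum_{k}\gamma^k\lambda_k'MX^l-fM^l\Big]\drm t,
\]
once the stochastic integrals are shown to be true martingales.

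\textbf{Step 2: the term $\Phi'(X(T))M(T)X^l(T)$.} Apply Itô to $\alpha X^l$ using \eqref{eq:alpha} and \eqref{eq:Psi}. The $\xi^k$-part has no bracket with $X^l$, since $X^l$ has no $Y$-jumps and the Poisson processes have no common jumps a.s. The brackets produce $\beta(\sigma_xX^l+\sigma_uv)$ and $\int_\cE\theta(\eta_xX^l+\eta_uv)\nu(\drm\zeta)$. The drift of $\alpha$ cancels all $X^l$-terms except $-\sum_k\gamma^k\lambda_k'MX^l-Mf_xX^l$, leaving
\[
\EE^\QQ[\alpha(T)X^l(T)]=\EE^\QQ\int_0^T\Big[(b_u\alpha+\sigma_u\beta+\textstyle\int_\cE\eta_u\theta\,\nu(\drm\zeta))v-\sum_k\gamma^k\lambda_k'MX^l-Mf_xX^l\Big]\drm t .
\]

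\textbf{Step 3: combine.} Insert Steps 1 and 2 into \eqref{eq:FD}. The coupling terms $\gamma^k\lambda_k'MX^l$, $fM^l$ and $Mf_xX^l$ all cancel; this is exactly why the $\lambda_k'$ term was placed in the $\alpha$-drift. What remains is $0=\EE^\QQ\int_0^T\partial_uH(t)\,v(t)\,\drm t$. Conditioning inside the integral on $\cF^{\overrightarrow Y}_t$ (Fubini plus the tower property, valid since $v(t)$ is $\cF^{\overrightarrow Y}_t$-measurable) gives
\[
\EE^\QQ\int_0^T\EE^\QQ[\partial_uH(t)\mid\cF^{\overrightarrow Y}_t]\,v(t)\,\drm t=0 .
\]
Take a predictable version $h(t)$ of the conditional expectation. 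Choosing $v=\mathrm{sign}(h)\mathbf 1_{\{|h|\le N\}}$, or a bounded truncation of $h$, forces $h=0$ $\drm t\otimes\drm\QQ$-a.e. Since $\QQ\sim\PP$, this holds $\drm t\otimes\drm\PP$-a.e. as well.

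\textbf{Main obstacle: integrability.} The hard part is checking that the local martingales in Steps 1 and 2 are true martingales and that $\partial_uH(t)$ is integrable. I would combine the following:
\begin{itemize}
\item BSDE estimates giving $\EE^\QQ\sup_t|\alpha|^2<\infty$ and $\EE^\QQ\int_0^T(|\beta|^2+\int_\cE|\theta|^2\nu(\drm\zeta)+\sum_k|\xi^k|^2+|\gamma^k|^2)\drm t<\infty$, from the square-integrable terminal data (Lemmas~\ref{lem:moment-PQ}, \ref{lem:M4}) and the bounded coefficients $b_x,\sigma_x,\lambda_k,\lambda_k'$;
\item the moment bounds of Lemma~\ref{lem:XL};
\item BDG together with Cauchy--Schwarz, to bound $\EE^\QQ\sup_t|\alpha X^l|$ and $\EE^\QQ\sup_t|\bar\gamma M^l|$.
\end{itemize}
The delicate pieces are the cross term $\gamma^k\lambda_k'MX^l$, which needs a Hölder split using the moments of $M$ of all orders, and the fact that $M^l$ is only controlled in $L^2$. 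If that is insufficient, I would localize with stopping times $\tau_N\uparrow T$ and pass to the limit by dominated convergence.
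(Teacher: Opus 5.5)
Your proposal is correct and is essentially the paper's proof: It\^{o}'s product formula for $\alpha X^l$ and $\bar\gamma M^l$, then substitution into \eqref{eq:FD} so that the $\gamma^k\lambda_k' M X^l$, $fM^l$ and $Mf_xX^l$ terms cancel, leaving $\EE^\QQ\int_0^T\partial_uH(t)v(t)\,\drm t=0$, and finally the arbitrariness of the bounded $\cF^{\overrightarrow{Y}}$-predictable $v$. Your treatment of the last step (a predictable version $h$ tested against $\mathrm{sign}(h)\mathbf 1_{\{|h|\le N\}}$) and your attention to the true-martingale property are more careful than the paper, which takes both for granted; note, however, that the inhomogeneous term $\sum_k\gamma^k\lambda_k'M$ in \eqref{eq:alpha} means $\EE^\QQ\sup_t|\alpha(t)|^2<\infty$ does not follow from square-integrable terminal data and bounded coefficients alone when only $\gamma^k\in L^2$ is known, so that step needs either higher integrability of $\gamma^k$ or the localization you mention together with a uniform-integrability bound.
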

\begin{proof}
	Applying It\^{o}'s formula to \(\alpha(\cdot)X^l(\cdot)\) and \(\bar{\gamma}(\cdot)M^l(\cdot)\), we have that $$
	\begin{aligned}
		\drm(\alpha(t)  X^l (t) ) = &-X^l   \bigg[ \sum_{k=1}^n \gamma^k  \lambda_k^\prime(t-) M(t-) + Mf_x\bigg] \drm t + \big[ \alpha b_u v + \beta \sigma_u v   +\int_{\cE}\eta_u(t,\zeta)  \theta(t-,\zeta)v(t-)\\ 
		&\nu(\drm \zeta)\big] \drm t + \{\cdots\}\drm W(t)+\{\cdots\}\tilde{N}(\drm\zeta,\drm t)+ \{\cdots\}\drm(Y_k(t)-t),
	\end{aligned}$$
	and $$
	\begin{aligned}
		\drm (\bar{\gamma}(t)M^l(t))  =& \bigg[- M^l(t-)f  + \sum_{k=1}^n \gamma^k  \lambda^\prime_k(t-) M(t-)X^l  \bigg] \drm t+ \{\cdots\}\drm W(t)\\
		&+\{\cdots\}\tilde{N}(\drm\zeta,\drm t)+ \{\cdots\}\drm(Y_k(t)-t).
	\end{aligned}$$  
	Taking expectations of the above equations, we obtain
	\begin{equation}\label{eq:E1}
		\begin{aligned}
			&\EE^\QQ[\Phi^\prime(X(T))M(T)X^l(T)]  = \EE^\QQ \biggl\{ \int_0^T -X^l   \biggl[ \sum_{k=1}^n \gamma^k  \lambda_k^\prime(t-)M(t-) + M  f_x   \biggr] \drm t \\
			&+ \bigl[ \alpha  b_u  v   + \beta  \sigma_u  v  +\int_\cE \eta_u(t,\zeta)  \theta(t-,\zeta)v(t-)\nu(\drm \zeta) \bigr] \drm t \biggr\},
		\end{aligned}
	\end{equation}
	and
	\begin{equation}\label{eq:E2}
		\begin{aligned}
			&\EE^\QQ\left[\Phi(X(T))M^l(T)\right] =\EE^\QQ \biggl[ \int_0^T \Bigl(
			-M^l(t-)f+ \sum_{k=1}^n \gamma^k \lambda^\prime_k(t-)M(t-)X^l
			\Bigr)\drm t \biggr].
		\end{aligned}
	\end{equation}
	
	Substituting \eqref{eq:E1} and \eqref{eq:E2} into \eqref{eq:FD}, we have that
	\begin{small}
		\begin{align*}
			&0= \frac{\drm }{\drm \ep} J(\ep)\left|_{\ep=0}\right.\\
			&=\EE^\QQ \biggl[ \int_0^T \Bigl(
			-M^l(t-)f+ \sum_{k=1}^n \gamma^k \lambda^\prime_k(t-)M(t-)X^l
			\Bigr)\drm t \biggr]+\EE^\QQ\left[\int_0^T f  M^l(t-)+M  \left(f_x  X^l  +f_u  v  \right)\drm t\right]\\
			& +\EE^\QQ\bigg\{\int_0^T(-X^l  )\left[\sum_{k=1}^n\gamma^k   \lambda_k^\prime(t-)M(t-)+M  f_x  \right]\drm t +\left[\alpha  b_u  v  +\beta  \sigma_u  v  +\int_\cE \eta_u(t,\zeta)\theta(t-,\zeta)v  \nu(\drm \zeta)\right]\drm t\bigg\} .
		\end{align*}
	\end{small}
	Combining with that $v$ is a $\cF^{\overrightarrow{Y}}$-predictable bounded process yields $$
	0= \EE^\QQ \Big[ \int_0^T \EE^\QQ\big[\partial_u H\big(t, u,X , M  , \alpha  , \beta  ,  \gamma^1,\cdots, \gamma^n,
	\theta  \big) \mid \cF_t^{\overrightarrow{Y}}\big] v(t)   \drm t   \Big].$$
	Therefore, by the arbitrariness of $v(\cdot)$, we have $$
	0= \EE^\QQ \left[ \partial_u H\left(t, u,X , M  , \alpha  , \beta  , \gamma^1,\cdots,\gamma^n,\theta  \right) \mid \cF^{\overrightarrow{Y}}_t \right].$$ Since $\PP$ and $\QQ$ are equivalent, this holds $\drm t \otimes \drm \PP-a.e.$.
	
\end{proof}

\begin{remark}
	Theorem~\ref{th:1} provides a first-order necessary condition 
	for local optimality under partial observations. Throughout we work with a convex control 	domain and employ additive perturbations around an interior optimal control. The existence of optimal controls and the corresponding sufficient conditions are left for future work.
\end{remark}

\section{A Linear-Quadratic Illustration}\label{sec:app}
We apply Theorem~\ref{th:1} to a concrete partially observed linear-quadratic (LQ) optimal control problem, in which the state dynamics are driven by a jump-diffusion process and the observation filtration is generated by multivariate counting processes.

\subsection{Problem Formulation}
Let $X(t)$ be a latent efficient price of an asset, governed by the following stochastic differential equation
\begin{equation}\label{eq:appl-state}
	\drm X(t) = \big[\vartheta(\bar X - X(t)) - \rho u(t)\big]\drm t 
	+ \sigma\drm W(t) + \int_\cE\eta(\zeta)  \tilde N(\drm\zeta, \drm t).
\end{equation}
Here, the drift captures endogenous mean reversion  with speed $\vartheta > 0$ toward the fundamental value $\bar{X}$ of the asset. The process $u(t)$ denotes the trading rate at time $t$, and $\rho > 0$ captures the magnitude of the permanent market impact exerted by this execution strategy on the efficient price. Stochastic fluctuations of the price are driven by constant volatility $\sigma > 0$, alongside an integral term $\eta(\zeta)$ that accounts for exogenous discrete shocks such as macroeconomic news arrivals or sudden liquidity events.

The coefficients $b(t,x,u) = \vartheta(\bar X - x) - \rho u$, 
$\sigma(t,x,u) \equiv \sigma$, and $\eta(t,x,u,\zeta) = \eta(\zeta)$ clearly satisfy Assumption~\ref{Ass:1}. Moreover, the controller observes only the counting process $\overrightarrow{Y}(t) = (Y_1, \ldots, Y_n)^\top$ from \eqref{eq:observe}, whose $k$-th component records market events at price level $y_k$ with intensity $\lambda_k(X(t),t)$ satisfying Assumptions~\ref{Ass:3}--\ref{Ass:4}.

The objective is to minimize the expected LQ cost
\begin{equation}\label{eq:appl-cost}
	J(u) =  \EE \left[X(T)^2 - k_1 X(T) + k_2\int_0^T u(t)^2 \drm t\right].
\end{equation}
Here, the terminal penalty $\Phi(x) = x^2 - k_1x$ incorporates a target execution criterion where the parameter $k_1 > 0$ acts as a Lagrange multiplier governing the mean-variance trade-off. The running cost $f(t, x, u) = k_2 u^2$, where $k_2 > 0$ serves as the liquidity cost parameter, penalizes excessively high trading rates, effectively capturing execution costs arising from market illiquidity and temporary price slippage.

To handle partial observability, we employ the measure change introduced in Subsection~\ref{sub:co}. Under the reference measure $\QQ$, the state equation of $X$ and the likelihood-ratio $M$ are mutually independent. The augmented state dynamics are given by \eqref{eq:Qstate}, where $X(t)$ now evolves according to \eqref{eq:appl-state}. Furthermore, the corresponding cost functional under $\QQ$ is
\[J(u(\cdot))= \EE^\QQ \left[\left(X(T)^2 - k_1 X(T)\right)M(T)+ k_2\int_0^T u^2 M\drm t \right].\]

Proceeding via the stochastic maximum principle under $\QQ$, the Hamiltonian is 
$H(t,u,x,m,\\ \alpha,\beta,  \gamma^1,\cdots,\gamma^n,\theta)=\left[\vartheta(\bar{X} - x) - \rho u  \right]\alpha+\sigma\beta+\sum_{k=1}^n \gamma^k\left(\lambda_k(x,t)-1\right)m+\int_{\mathcal{E}}\eta(\zeta)\theta(t,\zeta)\nu(\drm \zeta)+mk_2u^2.
$ The corresponding adjoint processes are governed by the following  BSDEs 
\begin{equation}\label{eq:FAalpha}
	\left\{
	\begin{aligned}
		\drm \alpha(t)&=-\Big[-\vartheta\alpha  +\sum_{k=1}^n\gamma^k  \lambda^\prime_k(t-)M(t-)\Big]\drm t+\beta  \drm W(t) +\int_{\mathcal{E}}\theta(t-,\zeta)\tilde{N}(\drm\zeta,\drm t)\\
		&\qquad+ \sum_{k=1}^n \xi^k   \mathrm{d}(Y_k(t) - t),\\
		\alpha(T) &=2X(T)M(T)-k_1 M(T).
	\end{aligned}
	\right. 
\end{equation}
and
\begin{equation}
	\left\{
	\begin{aligned}
		\drm \bar{\gamma}(t)&=-\left[\sum_{k=1}^{n}\gamma^k  \left(\lambda_k(t-)-1\right)+k_2u^2  \right] \drm t+\sum_{k=1}^{n} \gamma^k    \drm \left( Y_k(t)-t\right)+\bar{\beta}(t)\drm W(t)\\
		&\qquad+\int_{\cE}\bar{\theta}(t-,\zeta)\tilde{N}(\drm \zeta,\drm t), \\
		\bar{\gamma}(T)&=  X^2(T)-k_1X(T).
	\end{aligned}
	\right.
\end{equation}

\subsection{Derivation of a Riccati-Filtered Feedback}
\subsubsection{The Application of Theorem \ref{th:1}}
According to Theorem~\ref{th:1}, the optimal control $u$ satisfies the conditional stationarity condition. Because $u(t)$ is predictable, the stationarity condition $\EE^\QQ [\partial_u H \mid \cF_t^{\overrightarrow{Y}}] = 0$ gives $-\rho \EE^\QQ[\alpha(t) \mid \cF^{\overrightarrow Y}_t] + 2k_2 u(t) \EE^\QQ[M(t) \mid \cF^{\overrightarrow Y}_t] = 0$, yielding 
\begin{equation}\label{eq:optimal-u}
	u(t) = \frac{\rho}{2k_2} \cdot \frac{\EE^\QQ[\alpha(t) \mid \cF^{\overrightarrow Y}_t]}{\EE^\QQ[M(t) \mid \cF^{\overrightarrow Y}_t]}.
\end{equation}

To decouple the forward-backward system and derive a fully explicit feedback form, we postulate a linear ansatz for the adjoint variable $\alpha(t)$
\begin{align}\label{eq:alpha1}
	\alpha(t) = P(t) X(t) M(t) + Q(t) M(t),
\end{align}
where $P, Q:[0,T]\to\RR$ are deterministic with $P(T) = 2$, 
$Q(T) = -k_1$.  


Applying It\^o's product rule to \eqref{eq:alpha1}, we obtain
\begin{equation}\label{eq:dalpha-ansatz}
	\begin{aligned}
		&\drm \alpha(t) =  M  \Big[\bigl(P^\prime   - \vartheta P  \bigr) X  
		+ Q^\prime   + \vartheta \bar X P   - \rho P    u  \Big]\drm t+ \sigma M   P     \drm W(t)
		\\
		&\quad + \int_{\cE} M(t-) P   \eta(\zeta) \tilde N(\drm \zeta,\drm t)  + M(t-)\bigl[P   X(t-) + Q  \bigr] \sum_{k=1}^{n} \bigl(\lambda_k(t-)-1\bigr)\drm\bigl(Y_k(t)-t\bigr).
	\end{aligned}
\end{equation}

Since the state driver $\tilde{N}$ and the observation processes $Y_k$ exhibit no common jumps a.s., their cross-variation is strictly zero. Consequently, at any observation jump $\Delta Y_k(t) = 1$, the state remains continuous i.e. $\Delta X(t) = 0$, meaning the jump in $\alpha(t)$ is generated entirely by $\Delta M(t)$. Since $Y_1,\ldots,Y_n$ are independent unit Poisson processes 
under $\mathbb{Q}$, their compensated martingales are orthogonal. Then, equating coefficients term-by-term yields, for $k=1,\ldots,n$,
\begin{equation}\label{eq:xi-match}
	\xi^k(t) = M(t-)\bigl[P(t) X(t-) + Q(t)\bigr]\bigl(\lambda_k(t-) - 1\bigr).
\end{equation}

Finally, by comparing  \eqref{eq:dalpha-ansatz} with   \eqref{eq:FAalpha}, we  obtain 
\begin{align}\label{eq:beeta}
	\beta(t) = \sigma M(t)P(t), \quad \theta(t-,\zeta) = M(t-)P(t)\eta(\zeta),
\end{align}
and 
$M \left(\big(P^\prime   - \vartheta P  \big)X  + Q^\prime  + \vartheta \bar{X} P   - \rho P   u   \right) + M\sum_{k=1}^n \gamma^k  \lambda^\prime_k(X , t)-\vartheta \alpha  = 0 .$ Substituting the expression of $\alpha$ given in \eqref{eq:alpha1} into the above equation yields $
M	\bigl(P^\prime   - 2\vartheta P  \bigr)X  
+ M\bigl(Q^\prime   - \vartheta Q   + \vartheta \bar{X} P   - \rho P   u  \bigr)
+ M\sum_{k=1}^n \gamma^k   \lambda^\prime_k(X  ,t)  =  0 .$
Since $M(t)>0$, then 
\begin{equation}\label{eq:matching}
	\bigl(P^\prime   - 2\vartheta P  \bigr)X  
	+ \bigl(Q^\prime   - \vartheta Q   + \vartheta \bar{X} P   - \rho P   u  \bigr)
	+ \sum_{k=1}^n \gamma^k   \lambda^\prime_k(X  ,t)  =  0 .
\end{equation}

Determining \(P\) and \(Q\) directly from the adjoint equation
\eqref{eq:matching} is obstructed by the coupling term
\(\sum_k\gamma^k\lambda'_k(X,t)\), which reflects the nonlinear
filtering structure induced by state-dependent observation intensities. Therefore, rather than attempting to solve the full coupled adjoint system explicitly, we use the corresponding full-information LQ problem
as an auxiliary Riccati construction to obtain deterministic gains 	\(P\) and \(Q\). These gains are then combined with the conditional mean of the latent state to form an implementable certainty-equivalent feedback.

\begin{remark}
	The use of the full-information Riccati system in this step should not	be interpreted as an appeal to the classical LQG separation theorem. The present observation model is nonlinear and counting-process based on the state-dependent intensities, so the filtering distribution is generally non-Gaussian and no classical linear separation principle is invoked.
	
	Instead, the construction is justified by the structure of the conditional stationarity condition in Theorem \ref{th:1}. For the linear ansatz $ \alpha(t)=M(t)\big(P(t)X(t)+Q(t)\big), $ the relevant quantity in the stationarity condition is $
	\frac{\mathbb E^{\mathbb Q}[\alpha(t)\mid \mathcal F_t^{\overrightarrow Y}]} 	{\mathbb E^{\mathbb Q}[M(t)\mid \mathcal F_t^{\overrightarrow Y}]}.
	$
	Since all drift, diffusion, and cost coefficients are deterministic, the gains \(P(t)\) and \(Q(t)\) decouple from the stochastic dynamics and satisfy deterministic Riccati equations; they therefore factor out of the conditional expectation, giving $ \frac{\mathbb E^{\mathbb Q}[\alpha(t)\mid\mathcal F_t^{\overrightarrow Y}]}
	{\mathbb E^{\mathbb Q}[M(t)\mid\mathcal F_t^{\overrightarrow Y}]}=P(t)\frac{\mathbb E^{\mathbb Q}[M(t)X(t)\mid\mathcal F_t^{\overrightarrow Y}]}
	{\mathbb E^{\mathbb Q}[M(t)\mid\mathcal F_t^{\overrightarrow Y}]}+Q(t). $
	By the Kallianpur--Striebel formula given in \eqref{eq:ksfor}, the ratio on the right-hand side is
	$
	\frac{\mathbb E^{\mathbb Q}[M(t)X(t)\mid\mathcal F_t^{\overrightarrow Y}]}
	{\mathbb E^{\mathbb Q}[M(t)\mid\mathcal F_t^{\overrightarrow Y}]}=\mathbb E^{\mathbb P}[X(t)\mid\mathcal F_t^{\overrightarrow Y}]
	=\hat\pi_t.$
	Then, 
	\begin{equation}\label{eq:uPQ}
		u(t)=\frac{\rho}{2k_2}\bigl(P(t)\hat{\pi}_t+Q(t)\bigr).
	\end{equation}
	Thus the nonlinear observation model affects the feedback through the filter \(\hat\pi_t\), while the deterministic gains \(P,Q\) are supplied 	by the auxiliary full-information LQ Riccati equation. The resulting
	feedback is then checked directly against the conditional stationarity condition rather than assumed optimal by a classical separation theorem.
\end{remark}

\subsubsection{Riccati system via the full-information HJB}
To close the ansatz for the adjoint process, we first determine the
deterministic coefficients $P$ and $Q$ from an auxiliary full-information LQ problem in which the controller observes the latent state $X(t)$ directly. This auxiliary problem is used only as a computational device for identifying the Riccati coefficients; the partially observed feedback constructed below is then
checked through the conditional stationarity condition in Theorem~\ref{th:1}. Its filtration is the full-information filtration $\cF^X_t$, in contrast to the observation filtration $\cF^{\overrightarrow Y}_t$ in the original partially
observed problem. The state dynamics are given by \eqref{eq:appl-state} and the cost by \eqref{eq:appl-cost}, optimized over all $\cF^X_t$-progressively
measurable processes.  The full-information HJB equation is \[
0 = V_t + \inf_u\bigl\{[\vartheta(\bar X - x) - \rho u] V_x + k_2 u^2 + \cL^0 V\bigr\},
\]
where $\cL^0 V = \frac{\sigma^2}{2} V_{xx} + \int_\cE [V(t, x+\eta(\zeta)) - V(t,x) - \eta(\zeta) V_x]\nu(d\zeta)$ 
denotes the control-free diffusion-jump generator. 

Since for the LQ problem, $V(t, x) = A(t)x^2 + B(t)x + C(t)$, then the  full-information optimal control is $
u^{\mathrm{full}}(t, x) = \frac{\rho}{2k_2} V_x(t, x) = \frac{\rho}{2k_2}\bigl(2A(t)x + B(t)\bigr)$.
Substituting back into the HJB equation and matching coefficients of 
$x^2$, $x$, and the constant term yields the decoupled system
\begin{equation}\label{eq:AB-riccati}
	\left\{\begin{aligned}
		A'   &= 2\vartheta A   + \frac{\rho^2}{k_2} A^2  ,  \quad
		B'   = \vartheta B   - 2\vartheta \bar X A   + \frac{\rho^2}{k_2} A   B  ,  \\
		C'   &= -\vartheta \bar X B   + \frac{\rho^2}{4k_2} B^2   - \sigma^2 A   - A  \int_\cE \eta(\zeta)^2 \nu(d\zeta) .
	\end{aligned}\right.
\end{equation}
with the terminal value $A(T) = 1$, $  B(T)  = -k_1$, and $ C(T) = 0$.

Observe that the Poisson jump term enters only through the equation of $C$, and the feedback gains $(A, B)$ are unaffected by the jump compensator, then we define that $P(t) := 2A(t)$, and $ Q(t) := B(t)$, with terminal value $P(T) = 2$, and $Q(T) = -k_1$. Thus, $u^{\mathrm{full}}(t,x) = \frac{\rho}{2k_2}(P(t)x + Q(t))$. Solving this Riccati system yields the explicit closed-form expressions 
\begin{align}
	P(t) &= \frac{4\vartheta k_2}{\bigl(2\vartheta k_2 + \rho^2\bigr)e^{2\vartheta(T-t)} - \rho^2}, \label{eq:P-closed} \\
	Q(t) &= P(t)\Bigl[ e^{\vartheta(T-t)}\bigl(\bar X - k_1/2\bigr) - \bar X\Bigr]. \label{eq:Q-closed}
\end{align}
Here,  $(2\vartheta k_2+\rho^2)e^{2\vartheta(T-t)}-\rho^2$ 
is strictly positive for all $t\in[0,T]$ since 
$2\vartheta k_2+\rho^2 > \rho^2$, confirming that $P(t)$ 
is well-defined and bounded on $[0,T]$.

\subsubsection{Verification under partial information}
Define the candidate adjoint $\alpha(t) := M(t)(P(t) X(t) + Q(t))$ with $(P, Q)$ from \eqref{eq:P-closed} and \eqref{eq:Q-closed}. By \eqref{eq:dalpha-ansatz}, its martingale-representation integrands are $(\beta, \theta, \xi^k)$ given in \eqref{eq:beeta} and \eqref{eq:xi-match}. The drift identity \eqref{eq:matching}
acts as a consistency condition for the adjoint processes $\gamma^k$; since $\partial_u H$ does not depend on $\gamma^k$, their explicit closed-form representation is not needed for deriving the feedback law.

By substituting the closed-form $P$ and $Q$ given in \eqref{eq:P-closed} and \eqref{eq:Q-closed} respectively into \eqref{eq:uPQ}, we have 
\begin{equation}\label{eq:u-closed}
	u(t) = \frac{\rho P(t)}{2k_2}\bigl[\widehat\pi_t - \bar X + e^{\vartheta(T-t)}(\bar X - k_1/2)\bigr].
\end{equation}

To verify that \eqref{eq:u-closed} satisfies the necessary optimality condition of Theorem~\ref{th:1}, substituting \eqref{eq:u-closed} into $\partial_u H(t) = -\rho \alpha(t) + 2 k_2 M(t) u(t)$ and taking the conditional expectation confirms $\EE^{\QQ}[\partial_u H(t) \mid \cF_t^{\overrightarrow Y}] = 0$, so $u$ satisfies the necessary condition of Theorem~\ref{th:1}.  Following the convention of the partially observed maximum principle
literature (see, e.g., \cite{oksendal2010maximum, wang2015linear}), we adopt the first-order necessary condition of Theorem~\ref{th:1} as 	the operational optimality criterion. The candidate $u$ in	\eqref{eq:u-closed} combines the Riccati gains~$(P,Q)$ of the full-information LQ counterpart with the nonlinear filter estimate $\hat\pi_t$, and is verified a posteriori to satisfy this condition. When $\widehat\pi_t=X_t$, it reduces to the full-information optimal control. Its real-time implementation and empirical performance are developed in the next subsection \ref{sub:NA}.

\subsection{Numerical Illustration}\label{sub:NA}
This subsection provides a numerical illustration of the
Riccati-filtered feedback derived above.  The purpose is not to
propose a new filtering algorithm, but to illustrate how the feedback law can be implemented when the conditional mean
$\hat\pi_t=\mathbb E[X_t\mid\mathcal F_t^{\overrightarrow Y}]$ is
approximated from counting-process observations.  The experiment also highlights the numerical role of the non-Gaussian filtering structure induced by the state-dependent observation intensities, which is the main feature distinguishing the present model from state-independent observation-jump formulations.  We first specify the simulation model and filtering protocol.  We then report a Monte Carlo comparison with benchmark estimators and a skewness sweep.  Additional implementation details and sensitivity tests are provided in Appendices~\ref{App:2} and~\ref{App:3}, respectively.

\subsubsection{Numerical setup}\label{subsub:alg}

Throughout the numerical experiments, time is discretized on the
uniform grid $\{t_i=i\Delta \}_{i=0}^{N}$, $N=T/\Delta$, and the $n$ observation channels are counting processes whose stochastic intensities are, for \(k=1,\dots,n\),
\begin{equation}\label{eq:obs-intensity}
	\lambda_k(x)=\bar a\left[(1-\varepsilon_\lambda)
	\frac{\exp\big(-\beta(y_k-x)^2\big)}
	{\sum_{j=1}^{n}\exp\big(-\beta(y_j-x)^2\big)}
	+\frac{\varepsilon_\lambda}{n}\right], 
\end{equation}
where \(\varepsilon_\lambda=10^{-6}\). Then
\(\sum_{k=1}^n\lambda_k(x)\equiv\bar a\) and
\(\lambda_k(x)\ge \bar a\varepsilon_\lambda/n>0\) uniformly in \(x\), so the lower-bound requirement in Assumption \ref{Ass:4} is satisfied. The value of \(\varepsilon_\lambda\) is sufficiently small to have a negligible numerical effect on the reported experiments.

The latent state evolves as the controlled jump-diffusion
\begin{equation}\label{eq:astate}
	\drm X_t=\big[-\vartheta(X_t-\bar X)-\rho  u_t\big]  dt+\sigma  dW_t+dJ_t,
\end{equation}
where $J$ is a compound-Poisson process of rate $\lambda_J$ whose marks are the zero-mean two-point variable $\zeta=+a$ with probability $p$ and $\zeta=-b$ with probability $1-p$, calibrated so that $pa-(1-p)b=0$ and $\mathrm{Var}(\zeta)=pa^{2}+(1-p)b^{2}=V_J$. The performance criterion realized along a path is
\begin{equation}\label{eq:acost}
	\mathcal J=\int_0^T k_2  u_t^2  dt+X_T^{2}-k_1 X_T .
\end{equation}

\paragraph{Implementation.} 
The baseline configuration,  with horizon $T=2$, step $\Delta=5\times10^{-4}$, $n=9$ levels, $\bar a=50$, $\vartheta=0.5$, $\rho=0.4$, $\sigma=0.15$, $k_1=2$, $k_2=0.5$, $\lambda_J=20$, $p_{\rm up}=0.65$, $V_J=0.0225$, $N_p=1500$, is listed in full in Table~\ref{tab:parameters} of Appendix~\ref{App:2}.

The estimate consumed at $t_i$ is the one-step predictive mean from the previous tick, so the control on $[t_i,t_{i+1})$ does not use the counting increment generated during the same interval. Since the Riccati gain $P(t)$ is in closed form, the only online quantity is $\hat\pi_t$.  Full implementation details, including the particle
filter and the controller recursion, are presented in
Appendix~\ref{App:2}.

\subsubsection{Numerical results}

\paragraph{Six-method Monte Carlo comparison} 

All $p$-values are two-sided paired-sample
$t$-tests across the $M=300$ common-noise replications, and RMSE
denotes the root-mean-square predictive error
$\bigl[\frac{1}{N}\sum_i(\hat\pi_{t_i}-X_{t_i})^2\bigr]^{1/2}$.

\begin{table}[htbp]
	\centering 
	\setlength{\tabcolsep}{3pt} 
	\caption{Six-method Monte Carlo comparison ($M=300$, paired noise, asymmetric zero-mean jumps, skewness $\approx -0.63$).}
	\label{tab:six}
	\begin{tabular}{lcccccc}
		\toprule
		Method & Mean $\bar{J}$ & SEM & Gap to FI & RMSE & $p_J$ vs Proposed & $p_{\mathrm{RMSE}}$ vs Proposed \\
		\midrule
		No-control & $-0.5596$ & $\pm 0.0334$ & $8.63\%$ & --- & 4.8e-18 & --- \\
		Moving-avg & $-0.6062$ & $\pm 0.0292$ & $1.02\%$ & $0.1964$ & 8.1e-03 & 1.3e-27 \\
		Gauss-proj & $-0.6068$ & $\pm 0.0292$ & $0.92\%$ & $0.1792$ & 4.6e-02 & 3.3e-10 \\
		Jump-blind & $-0.6022$ & $\pm 0.0296$ & $1.67\%$ & $0.2391$ & 6.0e-07 & 3.8e-63 \\
		\textbf{Proposed} & $-0.6087$ & $\pm 0.0285$ & $0.60\%$ & $0.1560$ & --- & --- \\
		FI oracle & $-0.6124$ & $\pm 0.0282$ & $0.00\%$ & $0$ & 4.4e-13 & --- \\
		\bottomrule
	\end{tabular}
	\begin{minipage}{\linewidth}
		\footnotesize
		\emph{Note:} No-control does not employ a filter; FI oracle uses $\hat{\pi}_t\equiv X_t$ so RMSE $=0$ by construction.
	\end{minipage}
\end{table}

We compare the proposed particle implementation against four
benchmarks: a no-control baseline, a moving-average estimator, a
Gaussian-projection (GP) filter, and a jump-blind particle variant. We also include a full-information (FI) oracle in which
$\hat\pi_t\equiv X_t$. All feedback-based methods other than the no-control baseline share the same Riccati gain and differ only in the state estimate.

Table~\ref{tab:six} reports the comparison under the nominal asymmetric jump law with skewness $\approx -0.63$; the corresponding box-plot is given in Figure~\ref{fig:six-methods} of Appendix~\ref{App:3}. The clearest separation appears at the filtering level: the predictive RMSE of the proposed particle
implementation is $0.1560$, compared with $0.1792$ for the GP filter and $0.2391$ for the jump-blind variant. The closed-loop cost shows a modest improvement: the proposed controller attains $\bar J=-0.6087$, with a $0.60\%$ gap to the FI oracle and a $0.32\%$ improvement over the GP filter with $p=4.6\times10^{-2}$. Since the jump-blind variant uses the same likelihood
update and differs only in the prediction step, its degradation reflects the value of jump-aware prediction. These results are consistent with the particle approximation better representing the non-Gaussian filtering distribution arising from the jump-diffusion state dynamics and the counting-process observation mechanism.

\paragraph{Skewness sweep} 
The preceding comparison combines two sources of non-Gaussianity: the counting-process observation mechanism and the asymmetric jumps in the state dynamics. To separate the role of jump asymmetry, we vary $p_{\rm up}\in\{0.50,0.55,0.60,0.65,0.75\}$ while keeping the jump variance fixed at $V_J=0.0225$. This produces absolute skewness values ranging from $0$ to about $1.16$. In this experiment, both the proposed particle filter and the GP filter use the correctly matched jump model. Absolute costs are not directly comparable across rows, because the zero-mean and fixed-variance constraints force the jump sizes to change with $p_{\rm up}$. The relevant quantity is therefore the within-row paired difference $\bar J_{\rm Proposed}-\bar J_{\rm GP}$.

As reported in Table~\ref{tab:skew} and Figure~\ref{fig:skewness} of
Appendix~\ref{App:3}, the particle filter maintains a stable RMSE
across the tested skewness levels,  whereas the GP error
increases as the jump distribution becomes more asymmetric. The paired cost difference favors the proposed controller in most cases,
indicating that the improved filtering accuracy can translate into a
lower closed-loop cost.

\subsection{Interpretation of the Feedback Law}
The  feedback decomposition \eqref{eq:u-closed} admits a transparent market-microstructure interpretation by separating the strategy into three economic components.

First, the filtered deviation term \(\widehat{\pi}_t-\bar X\) represents the conditional estimate of how far the latent efficient price is from its long-run level. Under the mean-reverting dynamics, the decision-maker increases the control effort, corresponding to selling pressure under the sign convention of
\eqref{eq:appl-state}, when the filtered state is above \(\bar X\), and takes the opposite action when it is below \(\bar X\).

Second, the deterministic terminal-target term $
e^{\vartheta(T-t)}\bigl(\bar X-k_1/2\bigr)$
accounts for the terminal tracking objective. Since $
X_T^2-k_1X_T=\left(X_T-\frac{k_1}{2}\right)^2-\frac{k_1^2}{4}$,
the value \(k_1/2\) determines the terminal target level up to an additive constant. In particular, under the terminal tracking choice \(k_1=2\bar X\), this term vanishes and the controller reduces to a filtered regulator around the long-run level \(\bar X\).

Finally, the overall responsiveness to these signals is modulated by the horizon-dependent gain \(\rho P(t)/(2k_2)\). This multiplier increases as the remaining horizon shrinks and attains its terminal value \(\rho/k_2\) as \(t\uparrow T\), reproducing the standard linear-quadratic intuition that control becomes more aggressive near the terminal time.

Thus, the feedback law shares the linear-quadratic execution structure of the Almgren--Chriss framework \cite{almgren2001optimal}, but differs in two important respects: the state entering the feedback is filtered from discrete-tick counting observations rather than directly observed, and the
state dynamics allow jumps to represent abrupt information shocks. The reduction of the stochastic adjoint system to \eqref{eq:u-closed} therefore provides an implementable partial-information feedback law depending on the latent state only through its conditional mean.

\section{Conclusion}\label{sec:conclusion}
We derived a stochastic maximum principle for partially observed
jump-diffusion systems with multivariate counting-process observations whose intensities depend on the latent state. By introducing a reference probability measure, the partially observed problem was transformed into an augmented control problem involving both the state process and the counting-process likelihood ratio. The state-dependence of the observation intensities leads to an additional coupling term in the likelihood-ratio variation, and carrying this term through the variational and adjoint equations yields a conditional Hamiltonian stationarity condition with respect to the observation filtration.

As an illustration, we applied the theorem to a linear-quadratic model and obtained a Riccati-filtered feedback law in which the latent state enters the controller only through its conditional mean. The numerical experiments show how this feedback can be implemented using a particle approximation under counting-process observations. The results indicate that, under asymmetric jump distributions, the particle implementation can improve filtering accuracy relative to a Gaussian-projection baseline, and that the improved filtering accuracy may translate into lower closed-loop cost in the nominal setting.

Establishing sufficient optimality conditions for the partial-information setting, and extending the framework to non-Markovian or multidimensional state-observation models are part of our ongoing work.


\bibliographystyle{plain}
\bibliography{sn-bibliography}

\appendix
	\section{The Proof of Theory given in Section \ref{sec:SMP}}\label{App:1}
	\textbf{The proof of Lemma \ref{lem:moment-PQ}:} 
	Since the continuous term of $\drm M$ is $-M(t-)\sum_k(\lambda_k - 1)\drm t$, and the purely jump term of $\drm M$ is $M(t-)\sum_k(\lambda_k - 1)\drm Y_k$.
	Note that $M$ has only jumps coming from the $\{Y_k\}$: when $Y_k$ 
	jumps, $M$ jumps by $\Delta M(t) = M(t-)(\lambda_k(X(t-),t-) - 1)$. 
	Applying It\^{o}'s formula for jump processes, $
	\drm M^q(t) = q M^{q-1}(t-)\Big[-M(t-)\sum_k(\lambda_k-1)\Big]\drm t + M^q(t-)\sum_k(\lambda_k^q - 1) \drm Y_k(t).$
	and rewriting $\drm Y_k = \drm(Y_k - t) + \drm t$,
	\begin{equation}\label{eq:Mkappa-SDE}
		\drm M^q(t) = M^q(t-)\sum_{k=1}^n(\lambda_k^q - 1) \drm(Y_k(t)-t) + M^q(t-) \Psi_q(t) \drm t,
	\end{equation}
	where $\Psi_q(t) := \sum_{k=1}^n[\lambda_k^q - 1 - q(\lambda_k - 1)]$.
	
	By Assumption \ref{Ass:4}, the intensity satisfies $\lambda_k(X(t),t)\in[\delta_\la, K_2]$. Since the functions $ \lambda^q - 1 - q(\lambda - 1)$ and $  \lambda^q - 1$ are continuous on the compact interval $[\delta_\la, K_2]$ regardless of the sign of $q$, $\Psi_q(t)$ and $|\lambda^q - 1|$ are uniformly bounded by constants $C_q$ and $C$ respectively, which depend only on $q$, $\delta_\la$, $K_2$, and $n$.

	For each $N \ge 1$, we  define the stopping time as follows
	\[
	\tau_N := \inf\left\{t\ge 0: M(t) > N \text{ or } M(t) < \frac{1}{N}\right\}\wedge T.
	\]
	On $[0,\tau_N]$, the process $M^q(s-)$ is bounded. Hence, the stochastic integral $$I^{\tau_N}(t) := \int_0^{t\wedge\tau_N} M^q(s-)\sum_{k=1}^n(\lambda_k^q-1) \drm(Y_k(s)-s)$$ is bounded, and $I^{\tau_N}$ is a true $\QQ$-martingale with $\EE^\QQ[I^{\tau_N}(t)]=0$. Taking $\QQ$-expectation in 
	\eqref{eq:Mkappa-SDE} stopped at $\tau_N$ gives
	$\EE^\QQ\bigl[M^q(t\wedge\tau_N)\bigr] = 1 +
	\EE^\QQ\int_0^{t\wedge\tau_N} M^q(s-)\Psi_q(s)\drm s.$
	Since $M^q(s-)>0$ and $|\Psi_q(s)|\le C_q$ uniformly, we obtain
	$\EE^\QQ\bigl[M^q(t\wedge\tau_N)\bigr] 
	\le 1 + C_q\int_0^t \EE^\QQ\bigl[M^q(s\wedge\tau_N)\bigr]\,\drm s.
	$ By Gronwall's inequality, 
	$\sup_{0\le t\le T}\EE^\QQ[M^q(t\wedge\tau_N)] \le e^{C_q T}$, 
	uniformly in $N$.

	Since $\lambda_k\in[\delta_\lambda, K_2]$ and each $Y_k$ has finitely many jumps on $[0,T]$ under $\QQ$,  $M(t)$ satisfies
	$M(t) = \prod_{k=1}^n\exp\Big(\int_0^t \log(\lambda_k(X(s-),s-))\drm Y_k(s)-\int_0^t (\lambda_k(s)-1)\drm s\Big)$, so both $M$ and $M^{-1}$ are pathwise bounded on $[0,T]$, which gives $\tau_N\uparrow T$, $\QQ$-a.s. Then, by using the Fatou's lemma, we have $\sup_{0\le t\le T}\EE^\QQ [M^q(t)] \le \liminf_{N\to\infty}\sup_{0\le t\le T}\EE^\QQ [M^q(t\wedge\tau_N)] \le e^{C_q T} < \infty.$
	This establishes \eqref{eq:Mq}.

	Applying \eqref{eq:Mq} with $q$ replaced by $2q$ gives $\EE^\QQ\int_0^T M^{2q}(s) \drm s \le T e^{C_{2q}T} < \infty$.
	The $\QQ$-compensator of $Y_k$ is $t$, so  $\langle I\rangle(t) = \int_0^t M^{2q}(s-)\sum_{k=1}^n(\lambda_k^q-1)^2 \drm s \le nC^2\int_0^T M^{2q}(s) \drm s,$ whose $\QQ$-expectation is finite. Since that $I(t)$ is a local martingale, then $I(t)$ is a true square-integrable $\QQ$-martingale, and $\EE^\QQ[I(t)]=0$. In particular, taking $q=1$ gives $\Psi_1(t) = \sum_k[\lambda_k - 1 
	- (\lambda_k-1)] = 0$, so \eqref{eq:Mkappa-SDE} reduces to 
	$M(t) = 1 + I(t)$, whence $M$ is a true $\QQ$-martingale with 
	$\EE^\QQ[M(t)]=1$.
	
	Choose $r=p_0$ given in the definition of $\cU_{ad}$, by using the H\"{o}lder's inequality, we have 
	\begin{align}
		&\EE^\QQ \int_0^T |u|^4\drm t 
		=\EE^\PP\Big[M^{-1}(T) \int_0^T|u|^4\drm t\Big]\le \left(\EE^\PP[M^{-\frac{r}{r-1}}(T)]\right)^{\frac{r-1}{r}}
		\left(\EE^\PP\left[ \left\lvert\int_0^T |u|^4\drm t\right\rvert^r\right]\right)^{\frac{1}{r}}\nonumber\\
		&\le T^{(r-1)/r} \left(\EE^\PP[M^{-\frac{r}{r-1}}(T)]\right)^{\frac{r-1}{r}}
		\left(\EE^\PP\left[ \int_0^T |u|^{4r}\drm t \right]\right)^{\frac{1}{r}}.
	\end{align}
	Since $\EE^\PP[M^q(t)]=\EE^\QQ[M^{q+1}(t)]$, the bound \eqref{eq:Mq} implies $\sup_t \EE^\PP [M^q(t)] < \infty$ for all $q\in\RR$. In particular, $\EE^\PP [M(T)^{-r/(r-1)}] < \infty$. Moreover, since $u\in\cU_{ad}$, we have $\EE^\PP\left[\int_0^T |u|^{4r}\drm t\right] <\infty$. Therefore $\EE^\QQ \int_0^T |u|^4\drm t<\infty$.
	\qed

	We present the proof of the Lemma \ref{lem:M4}.
	
	\noindent\textbf{The Proof of Lemma \ref{lem:M4}:}
	Since  $$
	\begin{aligned} 
		\EE^\QQ |X(t)|^4  \le  &|x|^4 + 27\EE^\QQ\Big|\int_0^t b(s,X(s),u(s))\drm s\Big|^4   +27\EE^\QQ\Big|\int_0^t \sigma(s,X(s),u(s))\drm W(s)\Big|^4   \\
		&+ 27\EE^\QQ\Big|\int_0^t  \int_{\cE} \eta(s,X(s-),u(s-),\zeta) \tilde N(\drm\zeta,\drm s)\Big|^4.
	\end{aligned}$$ 
	
	We estimate the three stochastic terms one by one.
	
	For drift term, by using the inequality $(a+b+c)^4\leq 27a^4+27 b^4+27 c^4$, together with the linear growth condition of $b$, i.e. $|b(s,x,u)| \le k(1+|x|+|u|)$ from Assumption \ref{Ass:1}, 
	\begin{equation}\label{eq:b-est}
		\EE^\QQ\Big|\int_0^t b\drm s\Big|^4  \le 27Tk \Big(1 + \EE^\QQ \int_0^t |X(s)|^4\drm s 
		+ \EE^\QQ \int_0^t |u(s)|^4\drm s\Big).
	\end{equation} 
	
	For the diffusion term,  applying the B-D-G inequality, and the fundamental inequality $(a+b+c)^4\leq 27a^4+27 b^4+27 c^4$, along with the linear growth of $\sigma$,
	\begin{equation}\label{eq:sigma-est}
		\begin{split}
			\EE^\QQ\Big|\int_0^t \sigma\drm W\Big|^4  \le 27C_GT \Big(1 + \EE^\QQ \int_0^t |X(s)|^4\drm s 
			+ \EE^\QQ \int_0^t |u(s)|^4\drm s\Big).
		\end{split}
	\end{equation} 
	
	For the jump term, by the Kunita inequality, (see, e.g.,   \cite{applebaum2009levy}, Corollary 4.4.24) with $p=4$,  and the fundamental inequality $(a+b+c)^4\leq 27a^4+27 b^4+27 c^4$, i.e. $\int_{\cE}|\eta|^2\nu(\drm\zeta) \le k_{\eta}(1+|x|^2+|u|^2)$ and $\int_{\cE}|\eta|^4\nu(\drm\zeta) \le k_{\eta}(1+|x|^4+|u|^4)$, then we have that  
	\begin{equation}\label{eq:eta-est}
		\begin{aligned}
			\EE^\QQ\Big|\int_0^t\int_{\cE}\eta \tilde N\Big|^4 \le  C_1 \Big(1 + \EE^\QQ \int_0^t |X(s)|^4\drm s 
			+ \EE^\QQ \int_0^t |u(s)|^4\drm s\Big),
		\end{aligned}
	\end{equation} 
	where $C_1$ depends on $C_D$, $k_{\eta}$, and $T$.
	
	Combining \eqref{eq:b-est} to \eqref{eq:eta-est} yields
	$\EE^\QQ|X(t)|^4 \le C_2\Big(1 + \EE^\QQ\int_0^T |u(s)|^4\drm s\Big) + C_2\int_0^t \EE^\QQ|X(s)|^4\drm s,$
	where $C_2=27Tk+27C_G k+ C_1$.
	
	Since $\EE^\QQ\int_0^T |u|^4\drm s < \infty$, then by using the Gronwall's inequality, we have that 
	$\sup_{0\le t\le T}\EE^\QQ|X(t)|^4 \le C\Big(1 + \EE^\QQ \int_0^T |u(s)|^4\drm s\Big),$
	where $C=C_2e^{C_2 T}$. Here, we have proved the inequality \eqref{eq:X8}.
	\qed

	We proceed to introducing the proof of the Lemma \ref{lem:XL}.
	
	\noindent\textbf{The Proof of Lemma \ref{lem:XL}:} 
	Since $b_x, b_u, \sigma_x, \sigma_u$ and the integrals $\int_\cE|\eta_x|^2\nu, \int_\cE|\eta_u|^2\nu$, $\int_\cE|\eta_x|^4\nu$, and $ \int_\cE|\eta_u|^4\nu$ are uniformly bounded by Assumption \ref{Ass:1}, applying It\^{o}'s formula to $|X^l|^4$ and utilizing the BDG inequality and Jensen's inequality yields $
	\EE^\QQ|X^l(t)|^4 \le \tilde{C}_1 \int_0^t \EE^\QQ|X^l(s)|^4\drm s  + \tilde{C}_1\EE^\QQ \int_0^T |v(s)|^4\drm s,$ where $\tilde{C}_1>0$ is a constant independent of $t$. By applying Gronwall's inequality, we obtain $\sup_{0\le t\le T} \EE^\QQ|X^l(t)|^4 < \infty$.
	
	For the estimation of $M^l$, applying It\^{o}'s formula to $|M^l(t)|^2$ under Assumption \ref{Ass:4} yields
	$\EE^\QQ|M^l(t)|^2 \le \tilde{C}_2\int_0^t \EE^\QQ|M^l(s)|^2\drm s 
	+ \tilde{C}_2\int_0^t \EE^\QQ\big[|M(s)|^2 |X^l(s)|^2\big]\drm s,$ where $\tilde{C}_2>0$ is a constant independent of $t$.
	
	Using the Cauchy--Schwarz inequality, Lemma \ref{lem:moment-PQ}, and the estimate of $|X^l|^4$ obtained above, we have $\EE^\QQ[|M|^2|X^l|^2] \le(\EE^\QQ[|M|^4])^{1/2}(\EE^\QQ[|X^l|^4])^{1/2} < \infty.$
	Applying Gronwall's inequality again, we conclude that $\sup_{0\le t\le T} \EE^\QQ|M^l(t)|^2 < \infty$.
	
	\qed
	
	Now, we present the proof of Lemma \ref{lem:LXM}.
	
	\noindent\textbf{The proof of Lemma \ref{lem:LXM}.} 
	We first prove \eqref{eq:X0}. Note that $\tilde{X}(0) = 0$, and
	$$\drm \tilde{X}(s)=b^1\drm s+\sigma^1\drm W(s)+\int_{\cE} \eta^1\tilde{N}(\drm \zeta,\drm s).$$
	Here,$$ 	b^1 =b^\rho_x(s)\tilde{X}(s)+\left(b^\rho_x(s)-b_x(s)\right)X^l(s)+\left(b_u^\rho(s)-b_u(s)\right)v(s),$$  $$ \sigma^1 =\sigma^\rho_x(s)\tilde{X}(s)+\left(\sigma^\rho_x(s)-\sigma_x(s)\right)X^l(s)+\left(\sigma_u^\rho(s)-\sigma_u(s)\right)v(s),$$ 
	and $$
	\eta^1= \eta_x^\rho(s,\zeta)\tilde{X}(s-)+\left(\eta_x^\rho(s,\zeta)-\eta_x(s,\zeta)\right)X^l(s-)+\left[\eta_u^\rho(s,\zeta)-\eta_u(s,\zeta)\right]v(s-),$$
	where $b_x^\rho $, $b_u^\rho$,$\sigma_x^\rho$,$\sigma_u^\rho$,$\eta_x^\rho,$ and $\eta_u^\rho$ are defined in \eqref{eq:mth}. 
	
	By using the  B-D-G  inequality, the Kunita  inequality, and Young's inequality, we obtain that
	\begin{equation*}
		\begin{aligned}
			&\EE^\QQ |\tilde{X}(t)|^4=\EE^\QQ\left\lvert \int_0^t b^1\drm s+\sigma^1\drm W(s)+\int_{\cE} \eta^1\tilde{N}(\drm \zeta,\drm s)\right\rvert^4\\
			&\leq 27 \EE^\QQ\left\lvert\int_0^t b^1\drm s\right\rvert^4+27\EE^\QQ \left\lvert\int_0^t \sigma^1\drm W(s)\right\rvert^4+27\EE^\QQ \left\lvert\int_0^t \int_{\cE} \eta^1\tilde{N}(\drm \zeta,\drm s)\right\rvert^4\\
			&\leq 27 T^3 \EE^\QQ\int_0^t\left\lvert b^1\right\rvert^4\drm s+27C_G T\EE^\QQ  \int_0^t |\sigma^1|^4\drm s +27C_DT\EE^\QQ\int_0^t \left(\int_{\cE} |\eta^1|^2\nu(\drm \zeta)\right)^2\drm s\\
			&\quad+27C_D \EE^\QQ \int_0^t \int_{\cE}|\eta^1|^4\nu(\drm \zeta)\drm s.
		\end{aligned}
	\end{equation*}
	
	We now consider the term \(\EE^\QQ\int_0^t \left\lvert b^1\right\rvert^4\drm s\). By applying  H\"{o}lder's inequality, and Assumption \ref{Ass:1}, we obtain that
	\begin{equation*}
		\begin{aligned}
			&\EE^\QQ\int_0^t \left\lvert b^1\right\rvert^4\drm s=\EE^\QQ \int_0^t\bigg\lvert  b^\rho_x \tilde{X} +\left(b^\rho_x -b_x \right)X^l +\left(b_u^\rho -b_u \right)v \bigg\lvert^4\drm s\\ 
			&\leq 27\tilde{C}_2 \EE^\QQ \int_0^t \lvert \tilde{X} \rvert^4\drm s+ 27  \EE^\QQ   \int_0^t \left\lvert b_x^\rho -b_x \right\rvert^4\cdot \left\lvert X^l \right\rvert^4\drm  +27  \EE^\QQ  \int_0^t \left\lvert b_u^\rho -b_u \right\rvert^4  \cdot\left\lvert v \right\rvert^4 \drm s.
		\end{aligned}
	\end{equation*}
	Since  $ \EE^\QQ   \int_0^t \left\lvert b_x^\rho -b_x \right\rvert^4\cdot \left\lvert X^l \right\rvert^4\drm s \le \tilde{C}_2^4\int_0^t \EE^\QQ  \left[ \left\lvert X^l \right\rvert^4\right]\drm s.$ Based on Assumption \ref{Ass:1} and Lemma \ref{lem:XL}, by applying the DCT first with respect to $\omega $, then with respect to $s$, we have that  
	\begin{align}
		\EE^\QQ  \left[\left\lvert b_x^\rho(s)-b_x(s)\right\rvert^4\cdot \left\lvert X^l(s)\right\rvert^4 \right]:=\cK(\ep,s)\to 0, \quad \text{ as }\ep\to 0.
	\end{align}
	Moreover, since that $\cK(\ep,s)\leq \tilde{C}_2^4  \EE^\QQ \left[\left\lvert X^l(s)\right\rvert^4\right] <\infty$ uniformly for $s\in[0,T]$, then by using the DCT again, we have $\int_0^t \cK(\ep,s)\drm s\to 0$ as $\ep\to 0$.
	
	By using the same discussion on  $\EE^\QQ  \int_0^t \left\lvert b_u^\rho(s)-b_u(s)\right\rvert^4  \cdot\left\lvert v(s)\right\rvert^4 \drm s$ , along with the boundedness of $v$  in place  of the bound on $X^l$, we obtain $
	\EE^\QQ\int_0^T |b_u^\rho - b_u|^4 |v|^4\drm s \to 0 \text{ as }\epsilon\to 0. $ 
	Therefore, we have that 
	\begin{equation}\label{eq:bx} 
		\EE^\QQ\int_0^t \left\lvert b^1 \right\rvert^4\drm s \leq 27\tilde{C}_2  \EE^\QQ \int_0^t \lvert \tilde{X}(s)\rvert^4\drm s+ 27  \tilde{\cK}_1(\ep), 
	\end{equation}
	where $
	\tilde{\cK}_1(\ep)=\int_0^T \cK(\ep,s)\drm s+\EE^\QQ  \int_0^T \left\lvert b_u^\rho(s)-b_u(s)\right\rvert^4  \cdot\left\lvert v(s)\right\rvert^4 \drm s\to 0,\text{as } \ep\to 0.$

	Now we turn to estimate the term of \(\EE^\QQ  \int_0^t |\sigma^1|^4\drm s \). Also, by applying Young's inequality, H\"{o}lder's inequality,    we obtain that
	\begin{equation*} 
		\begin{aligned}
			&\EE^\QQ  \int_0^t |\sigma^1|^4\drm s =\EE^\QQ  \int_0^t \bigg\lvert\sigma^\rho_x \tilde{X} +\left(\sigma^\rho_x -\sigma_x \right)X^l  +\left(\sigma_u^\rho -\sigma_u \right)v \bigg\rvert^4\drm s \\
			&\leq 27\tilde{C}_2\EE^\QQ  \int_0^t\lvert\tilde{X} \rvert^4\drm s+27\EE^\QQ  \int_0^t \lvert  \sigma^\rho_x -\sigma_x \rvert^4\cdot \lvert X^l \rvert^4\drm s +27\EE^\QQ  \int_0^t \lvert \sigma_u^\rho -\sigma_u \rvert^4\cdot \lvert v \rvert^4\drm s.
		\end{aligned}
	\end{equation*}
	
	By the same discussion as the estimation of 	$\EE^\QQ  \int_0^t |b^1|^4\drm s$, then we obtain $
	\tilde{\cK}_2(\ep)=\EE^\QQ  \int_0^T \lvert  \sigma^\rho_x(s)-\sigma_x(s)\rvert^4\cdot \lvert X^l(s)\rvert^4\drm s+ \EE^\QQ  \int_0^T \lvert \sigma_u^\rho(s)-\sigma_u(s)\rvert^4\cdot \lvert v(s)\rvert^4\drm s\to 0\text{ as } \ep\to 0.$
	Thus,
	\begin{equation}\label{eq:s-1}
		\begin{aligned}
			\EE^\QQ  \int_0^t |\sigma^1|^4\drm s  \le 27\tilde{C}_3\EE^\QQ\int_0^t \lvert\tilde{X}(s)\rvert^4\drm s+27 \tilde{\cK}_2(\ep).
		\end{aligned}
	\end{equation}

	Now, we turn to estimate the term of \(\EE^\QQ\Big(\int_0^t \int_{\cE} |\eta^1|^2\nu(\drm \zeta)\drm s\Big)^2\). Since
	\begin{equation}
		\begin{aligned}
			&\EE^\QQ\left(\int_0^t \int_{\cE} |\eta^1|^2\nu(\drm \zeta)\drm s\right)^2\\
			&=\EE^\QQ \bigg(\int_0^t  \int_{\cE} \Big\lvert \eta_x^\rho(s,\zeta)\tilde{X}(s-)+\left(\eta_x^\rho(s,\zeta)-\eta_x(s,\zeta)\right)X^l(s-) +\left[\eta_u^\rho(s,\zeta)-\eta_u(s,\zeta)\right]v(s-)\Big\rvert^2\nu(\drm \zeta)\drm s\bigg)^2\\ 
			&\leq 8\tilde{C}_3t\EE^\QQ \int_0^t \lvert \tilde{X}(s-)\rvert^4\drm s+8\EE^\QQ\Big( \int_0^t \int_{\cE} \lvert \eta^\rho_x(s,\zeta)-\eta_x(s,\zeta)\rvert^2 \lvert X^l(s-)\rvert^2\nu(\drm \zeta)\drm s\Big)^2 \\&\quad+8\EE^\QQ \Big(\int_0^t \int_{\cE}\lvert \eta_u^\rho(s,\zeta)-\eta_u(s,\zeta)\rvert^2\lvert v(s-)\rvert^2\nu(\drm \zeta)\drm s\Big)^2.
		\end{aligned}
	\end{equation}
	
	Now we define that  $$
	\begin{aligned}
		\tilde{\cK}_3(\ep)=&  \EE^\QQ\Big( \int_0^T \int_{\cE} \lvert \eta^\rho_x(s,\zeta)-\eta_x(s,\zeta)\rvert^2\cdot\lvert X^l(s-)\rvert^2\nu(\drm \zeta)\drm s\Big)^2 \\
		&+ \EE^\QQ \Big(\int_0^T \int_{\cE}  \lvert \eta_u^\rho(s,\zeta)-  \eta_u(s,\zeta)\rvert^2  \cdot \lvert v(s-)\rvert^2\nu(\drm \zeta)\drm s\Big)^2 .
	\end{aligned}$$
	Since $\eta_x(t,x,u,\zeta)$ is pointwise continuous in $(x,u)$ 
	for $\nu$-a.e.\ $\zeta$ and the maps $(x,u)\mapsto \int_{\cE}|\eta_x(t,x,u,\zeta)|^p\nu(\drm\zeta)$, $p=2,4$, are continuous by Assumption~\ref{Ass:1}, the DCT yields 
	$\int_{\cE}|\eta_x(t,x_n,u_n,\zeta)-\eta_x(t,x,u,\zeta)|^p
	\nu(\drm\zeta)\to 0$ whenever $(x_n,u_n)\to(x,u)$, and the same discussion applies to $\eta_u$. Consequently, since $\xi^\epsilon\to 0$ in probability, 
	$\int_{\cE}|\eta_x^\rho-\eta_x|^2\nu(\drm\zeta)\to 0$ 
	in probability. Since $\int_{\mathcal{E}}|\eta_x^\rho-\eta_x|^2\nu(\drm\zeta) \le 2L$ uniformly, by applying Jensen's inequality and   the DCT first with respect to $\omega $, then with respect to $s$,  together with $\sup_s\mathbb{E}^{\mathbb{Q}}|X^l(s)|^4<\infty$ given in Lemma~\ref{lem:XL}, we obtain $\tilde{\mathcal{K}}_3(\epsilon)\to 0$ .  The term involving $\eta_u$ is treated identically, using $\mathbb{E}^{\mathbb{Q}}\int_0^T|v(s)|^4\drm s<\infty$    in place of the bound on $X^l$.
	
	Finally, since that $\EE^\QQ \int_0^t \int_{\cE}|\eta^1|^4\nu(\drm \zeta)\drm s \leq 27L \EE^\QQ \int_0^t |\tilde{X}(s-)|^4\drm s+27  \EE^\QQ  \int_0^t \int_{\cE} \lvert \eta^\rho_x(s,\zeta)-\eta_x(s,\zeta)\rvert^4\cdot\lvert X^l(s-)\rvert^4\nu(\drm \zeta)\drm s +27\EE^\QQ  \int_0^t \int_{\cE}\lvert \eta_u^\rho(s,\zeta)-\eta_u(s,\zeta)\rvert^4\cdot \lvert v(s-)\rvert^4\nu(\drm \zeta)\drm s$, then by an argument similar to that used for $\tilde{\cK}_3$, and applying the dominated convergence theorem twice, we can show that
	$\tilde{\cK}_4(\ep)=\EE^\QQ  \int_0^t \int_{\cE} \lvert \eta^\rho_x(s,\zeta)-\eta_x(s,\zeta)\rvert^4\cdot\lvert X^l(s-)\rvert^4\nu(\drm \zeta)\drm s
	+\EE^\QQ  \int_0^t \int_{\cE}\lvert \eta_u^\rho(s,\zeta)-\eta_u(s,\zeta)\rvert^4\cdot \lvert v(s-)\rvert^4\nu(\drm \zeta)\drm s \to 0,\text{ as }\ep\to 0.
	$

	Therefore, by combining the above estimates, we conclude that there exists a constant  \(C\) which is independent of time $t$ such that $ \EE^\QQ |\tilde{X}(t)|^4\leq C  \int_0^t \EE^\QQ  \lvert \tilde{X}(s)\rvert^4\drm s+C \sum_{i=1}^4 \tilde{\cK}_i(\ep).
	$ By applying Gronwall's inequality, we obtain that
	$
	\EE^\QQ |\tilde{X}(t)|^4\leq C e^{CT}\sum_{i=1}^4 \tilde{\cK}_i(\ep),
	$
	which implies that  $
	\sup_{0\leq t\leq T}\EE^\QQ |\tilde{X}(t)|^4\leq C e^{CT}\sum_{i=1}^4 \tilde{\cK}_i(\ep)\to 0 \text{ as }\ep \to 0.
	$

	We now prove the limit \eqref{eq:M0}. First, we observe that
	$$\drm \tilde{M}(s)=\sum_{k=1}^n  \Big\{(\lambda_k^\epsilon  -1)\tilde{M}  +(\lambda^\prime_{k,\rho}  -\lambda_k^\prime  )X^l  M  +\lambda_{k,\rho}^\prime  \tilde{X}  M  -(\lambda_k^\ep  -\lambda_k  )M^l  \Big\}\drm (Y_k(s)-s), $$ and $\tilde{M}(0)=0$. Here, $ 
	\lambda_{k,\rho}^\prime(s-)=\int_0^1 \lambda_k^\prime\Big(X(s-)+\rho\epsilon\left(\tilde{X}(s)+X^l(s)\right),s-\Big)  \drm \rho. $
	Then, by using the It\^{o} isometry and some fundamental inequalities,  we have
	\begin{align*}
		& \EE^\QQ \lvert \tilde{M}(t)\rvert^2 \\
		&=\EE^\QQ \Big\lvert \int_0^t\sum_{k=1}^n  \Big\{(\lambda_k^\epsilon(s-)-1)\tilde{M}(s-)+(\lambda^\prime_{k,\rho}(s-)-\lambda_k^\prime(s-))  X^l(s-)M(s-) \\
		&\quad+\lambda_{k,\rho}^\prime(s-)\tilde{X}(s-) M(s-)  -(\lambda_k^\ep(s-)-\lambda_k(s-))M^l(s-)
		\Big\}\drm (Y_k(s)-s)\Big\rvert^2\\
		&\leq \tilde{C}_5\EE^\QQ \int_0^t \lvert \tilde{M}(s-)\rvert^2\drm s+\tilde{C}_5\EE^\QQ \int_0^t \lvert\tilde{X}(s-)\rvert^2\cdot \lvert M(s-)\rvert^2\drm s\\
		&\quad+16C_D\EE^\QQ \int_0^t \lvert \lambda_{k,\rho}^\prime(s-)-\lambda_k^\prime(s-)\rvert^2\cdot \lvert X^l(s-)\rvert^2\cdot \lvert M(s-)\rvert^2\drm s\\
		&\quad +16C_D\EE^\QQ \int_0^t \lvert \lambda^\ep_k(s-)-\lambda_k(s-)\rvert^2\cdot \lvert M^l(s-)\rvert^2\drm s.
	\end{align*}
	
	For the term of \(\EE^\QQ \int_0^t \lvert \lambda_{k,\rho}^\prime(s-)-\lambda_k^\prime(s-)\rvert^2\cdot \lvert X^l(s-)\rvert^2\cdot \lvert M(s-)\rvert^2\drm s\), based on the Assumptions \ref{Ass:3}  and \ref{Ass:4}, by using Cauchy Schwarz twice, we can derive that
	\begin{equation}
		\begin{aligned}
			& \EE^\QQ \int_0^t \lvert \lambda_{k,\rho}^\prime(s-)-\lambda_k^\prime(s-)\rvert^2\cdot \lvert X^l(s-)\rvert^2\cdot \lvert M(s-)\rvert^2\drm s\\
			&= \EE^\QQ \int_0^t \left[\lvert \lambda_{k,\rho}^\prime(s-)-\lambda_k^\prime(s-)\rvert^2\cdot \lvert X^l(s-)\rvert^2\cdot \lvert M(s-)\rvert^2\right]\drm s\\
			&\leq C_{\lambda}  \int_0^t   \left(\EE^\QQ \lvert X^l(s-)\rvert^4\right)^{\frac{1}{2}}\cdot \left(\EE^\QQ \lvert M(s-)\rvert^4\right)^{\frac{1}{2}}\drm s,
		\end{aligned}
	\end{equation}
	where $C_\la$ is the constant depending on the upper bounded of $\sup_k\la^{\prime}_k$.
	
	Since $\lambda'_k$ is continuous and uniformly bounded in $x$ by Assumption~\ref{Ass:4} , we have $|\lambda'_{k,\rho}(s-)-\lambda'_k(s-)|^2\to 0$ in probability.   By the Dominated Convergence Theorem, $\mathbb{E}[|\lambda'_{k,\rho}-\lambda'_k|^2|X^l|^2|M|^2]\to 0$ for each $s$. Since this quantity is bounded by $C_\lambda(\mathbb{E}|X^l|^4)^{1/2}(\mathbb{E}|M|^4)^{1/2}<\infty$ uniformly in $s$, applying the DCT again with respect to $s$ gives
	$$\mathbb{E}^{\mathbb{Q}}\int_0^t|\lambda'_{k,\rho}-\lambda'_k|^2|X^l|^2|M|^2\drm s \to 0 \quad \text{as } \epsilon\to 0.$$

	For the term $\EE^\QQ \int_0^t \lvert \lambda^\ep_k(s-)-\lambda_k(s-)\rvert^2\cdot \lvert M^l(s-)\rvert^2\drm s$,  since $\lambda_k$ is continuously differentiable in $x$ with $|\lambda_k| \le K_2$ uniformly given in Assumption~\ref{Ass:4}, we have $|\lambda_k^\epsilon(s-)-\lambda_k(s-)|^2 \le (2K_2)^2$ and $|\lambda_k^\epsilon(s-)-\lambda_k(s-)| \to 0$ in probability as $\epsilon \to 0$. Since $M^l$ does not depend on $\epsilon$ and $\sup_{0\le s\le T}\mathbb{E}^{\mathbb{Q}}|M^l(s)|^2<\infty$ by Lemma~\ref{lem:XL}, the integrand $|\lambda_k^\epsilon-\lambda_k|^2|M^l|^2$ is dominated by $4K_2^2|M^l|^2$ with finite expectation. By the Dominated Convergence Theorem (DCT) , that is first in $\omega$, then in $s$, we have 
	$$\mathbb{E}^{\mathbb{Q}}\int_0^t|\lambda_k^\epsilon(s-)-\lambda_k(s-)|^2|M^l(s-)|^2\mathrm{d}s \to 0 \quad \text{as } \epsilon \to 0.$$
	
	Regarding the term \(\EE^\QQ \int_0^t \lvert\tilde{X}(s-)\rvert^2\cdot \lvert M(s-)\rvert^2\drm s\), by applying relevant inequalities and leveraging the convergence result established in the proof of \eqref{eq:X0}, we can show that
	\begin{equation}
		\begin{aligned}
			\EE^\QQ \int_0^t \lvert\tilde{X}(s-)\rvert^2\cdot \lvert M(s-)\rvert^2\drm s &= \int_0^t\EE^\QQ \lvert\tilde{X}(s-)\rvert^2\cdot \lvert M(s-)\rvert^2\drm s\\
			&\leq \int_0^t \left(\EE^\QQ \lvert\tilde{X}(s-)\rvert^4\right)^{\frac{1}{2}}\cdot \left(\EE^\QQ \lvert M(s-)\rvert^4\right)^{\frac{1}{2}}\drm s\\
			&\leq  \int_0^t\left(\sup_{0\le s \le T} \EE^\QQ \lvert\tilde{X}(s)\rvert^4\right)^{\frac{1}{2}}\cdot\left(\EE^\QQ \lvert M(s-)\rvert^4\right)^{\frac{1}{2}}\drm s\\
			&\le \left[C e^{CT}\sum_{i=1}^4 \tilde{\cK}_i(\ep)\right]^{\frac{1}{2}}\int_0^t\left(\EE^\QQ \lvert M(s-)\rvert^4\right)^{\frac{1}{2}}\drm s\to 0 \text{ as }\ep\to 0.
		\end{aligned}
	\end{equation}

	Thus, we define $
	\tilde{\cK}_5(\ep)=\EE^\QQ \int_0^T \lvert \lambda_{k,\rho}^\prime(s-)-\lambda_k^\prime(s-)\rvert^2\cdot \lvert X^l(s-)\rvert^2\cdot \lvert M(s-)\rvert^2\drm s+\EE^\QQ \int_0^T \lvert \lambda^\ep_k(s-)-\lambda_k(s-)\rvert^2\cdot \lvert M^l(s-)\rvert^2\drm s+\EE^\QQ \int_0^T \lvert\tilde{X}(s-)\rvert^2\cdot \lvert M(s-)\rvert^2\drm s,$
	then $\tilde{\cK}_5(\ep)\to 0$ as $\ep\to 0$.
	
	Therefore, $
	\EE^\QQ \lvert \tilde{M}(t)\rvert^2  \leq \tilde{C}_5\EE^\QQ \int_0^t \lvert \tilde{M}(s-)\rvert^2\drm s+ \tilde{C}_6 \tilde{\cK}_5(\ep).$
	By using the Gronwall's inequality, we have $
	\EE^\QQ \lvert \tilde{M}(t)\rvert^2 \le \tilde{C}_6 \tilde{\cK}_5(\ep)e^{\tilde{C}_5 T},$
	which implies that $
	\sup_{0\le t\le T}\EE^\QQ \lvert \tilde{M}(t)\rvert^2 \le \tilde{C}_6 \tilde{\cK}_5(\ep)e^{\tilde{C}_5 T}\to 0\text{ as }\ep\to 0.$

	\qed

	\section{Numerical Implementation Details}\label{App:2}
	\begin{algorithm}[H]
		\caption{Particle approximation of the counting-process filter (one tick)}
		\label{alg:pf} 
		\begin{algorithmic}[1]
			\Require particles $\{x^i\}_{i=1}^{N_p}$ and log-weights $\{\log w^i\}$ from the previous tick;
			step $\Delta$; control $u_t$ (from Alg.~\ref{alg:ctrl});
			observation increment $dN_t=(dN^1_t,\dots,dN^n_t)$;
			controller model $(\vartheta,\rho,\sigma,\lambda_J,p_{\rm up},a,b,\bar a,\beta,\varepsilon_\lambda,\{y_k\})$
			\Require an internal RNG $\mathrm{rng}_{\rm pf}$, \emph{independent} of the plant's common
			random numbers, supplying all particle noise (prediction $\xi^i$, particle jumps,
			and the resampling draw)
			\Statex \textit{Initialization at $t_0$ (first tick only):}
			$x^i\sim\mathcal N(x_0,\sigma_0^2)$ with $\sigma_0=0.05$,\ \ $\log w^i\gets 0$.
			\Statex \textbf{(1) Zakai weight update for the point process}
			\For{$i=1,\dots,N_p$}
			\State $\begin{aligned}[t]
				\log w^i \mathrel{+}= {}&
				\sum_{k=1}^{n} \big(1-\lambda_k(x^i)\big)\Delta \\
				&+\sum_{k=1}^{n} dN^k_t  \log \big(\lambda_k(x^i)\vee \varepsilon\big)
			\end{aligned}$
			\Comment{$\varepsilon=10^{-300}$ floor}
			\EndFor
			\State $\tilde w^i \gets \mathrm{softmax}_i(\log w^i)$
			\Statex \textbf{(2) Resample if degenerate}
			\State $\mathrm{ESS}\gets 1\big/\sum_i(\tilde w^i)^2$
			\State $\mathit{resampled}\gets \mathrm{false}$
			\If{$\mathrm{ESS} < N_p/2$}
			\State draw $U\sim\mathcal U[0,1)$ from $\mathrm{rng}_{\rm pf}$; set targets $c_i=(U+i-1)/N_p$, $i=1,\dots,N_p$
			\State $\{x^i\}\gets$ systematic resampling of $\{x^i\}$ against $\mathrm{cumsum}(\tilde w)$ at $\{c_i\}$
			\State $\log w^i\gets 0$ for all $i$;\quad $\mathit{resampled}\gets \mathrm{true}$ \Comment{weights reset to uniform}
			\EndIf
			\Statex \textbf{(3) Prediction under the controlled jump-diffusion}
			\For{$i=1,\dots,N_p$}
			\State $\xi^i\sim\mathcal N(0,1)$ \ from $\mathrm{rng}_{\rm pf}$
			\State with prob.\ $\lambda_J\Delta$ a jump occurs (else $J^i\gets 0$);
			if it occurs, $J^i\gets +a$ w.p.\ $p_{\rm up}$ and 
			$-b$ w.p.\ $1-p_{\rm up}$
			\State $x^i \gets x^i+\big[-\vartheta(x^i-\bar X)-\rho  u_t\big]\Delta
			+\sigma\sqrt{\Delta}  \xi^i+J^i$
			\EndFor
			\Statex \textbf{(4) Predictive mean under the current weights}
			\State $\bar w^i\gets 1/N_p$ if $\mathit{resampled}$, otherwise $\bar w^i\gets \tilde w^i$
			\State $\hat\pi_{t+\Delta}\gets \sum_i \bar w^i  x^i$
			\Comment{the quantity Alg.~\ref{alg:ctrl} uses next tick}
			\State recompute $\mathrm{ESS}\gets 1\big/\sum_i(\bar w^i)^2$ \Comment{post-resample ESS}
			\State \Return updated ensemble $\{x^i,\log w^i\}$, $\hat\pi_{t+\Delta}$, and $\mathrm{ESS}$
		\end{algorithmic}
	\end{algorithm}
	
	In Algorithm~\ref{alg:ctrl}, all quantities appearing in the feedback law are evaluated under the controller model $p_{\rm ctrl}$, whereas the observation generation and the state update are evaluated under the true plant model $p_{\rm true}$. This distinction is essential in the robustness experiments,
	where $p_{\rm ctrl}$ is deliberately misspecified while $p_{\rm true}$ is kept fixed. The reported filter error is a one-step \emph{predictive} RMSE: the predictive mean $\hat\pi_{t_i}$ from Alg.~\ref{alg:pf} is compared against the true state $X_{t_i}$ at the same instant. The cost weights $k_1,k_2$ and the target $\bar X$ are common to both $p_{\rm true}$ and $p_{\rm ctrl}$; the misspecification experiments perturb only the dynamics and observation parameters ($\vartheta,\sigma,\lambda_J$), never the objective.
	\begin{algorithm}[H]
		\caption{Riccati-filtered feedback controller along one simulated path}
		\label{alg:ctrl} 
		\small
		\begin{algorithmic}[1]
			\Require horizon \(T\), step \(\Delta\), target \(\bar X\),
			true plant model \(p_{\rm true}\), controller/filter model \(p_{\rm ctrl}\)
			\Require initial state \(x_0\); plant common random numbers
			\(\omega=\big(dW_i,  U^{\rm obs}_{i,k},  U^{\rm jmp}_i,  U^{\rm dir}_i\big)_{i,k}\),
			where \(dW_i=\sqrt{\Delta}\eta_i\), \(\eta_i\sim\mathcal N(0,1)\)
			\Require a filter block, Algorithm~\ref{alg:pf} by default, initialized at \(\hat\pi_0=x_0\)
			\State \(X\gets x_0\);\quad \(\hat\pi\gets \hat\pi_0\);\quad \(\mathcal J\gets 0\)
			\For{\(i=0,1,\dots,N-1\)}
			\State \(t\gets t_i\), \quad \(\tau\gets T-t\)
			\Statex \textit{ feedback law below uses the controller dynamics and the common objective weights}
			\State \(P_{\rm ctrl}(t)\gets
			\dfrac{4\vartheta_{\rm ctrl}k_{2}}
			{(2\vartheta_{\rm ctrl}k_{2}+\rho_{\rm ctrl}^2)
				e^{2\vartheta_{\rm ctrl}\tau}-\rho_{\rm ctrl}^2}\)
			\Comment{closed-form Riccati gain; $\tau\ge\Delta>0$ throughout, so the denominator is bounded away from zero}
			\State \(u_t\gets
			\dfrac{\rho_{\rm ctrl}P_{\rm ctrl}(t)}{2k_{2}}
			\Big(\hat\pi-\bar X+
			e^{\vartheta_{\rm ctrl}\tau}
			(\bar X-\tfrac{k_{1}}{2})\Big)\)
			\Comment{certainty-equivalent feedback}
			\Statex \textit{ true observation and plant update below are evaluated under \(p_{\rm true}\)}
			\State \(dN^k_t\gets \mathbf 1 \big[
			U^{\rm obs}_{i,k}<\lambda_k^{\rm true}(X)\Delta
			\big]\), \quad \(k=1,\dots,n\)
			\Comment{events are read from \(\lambda^{\rm true}(X_t)\) before the Euler update}		
			\State \(J^{\rm true}\gets 0\)
			\If{\(U^{\rm jmp}_i<\lambda^{\rm true}_J\Delta\)}
			\If{\(U^{\rm dir}_i<p_{\rm up}^{\rm true}\)}
			\State \(J^{\rm true}\gets a_{\rm true}\)
			\Else
			\State \(J^{\rm true}\gets -b_{\rm true}\)
			\EndIf
			\EndIf		
			\State \(X\gets X+
			[-\vartheta_{\rm true}(X-\bar X)-\rho_{\rm true}u_t]\Delta
			+\sigma_{\rm true}dW_i+J^{\rm true}\)		
			\State \(\mathcal J\gets \mathcal J+k_{2}  u_t^2\Delta\)
			\Statex \textit{// advance the filter one tick with \((u_t,dN_t)\), all model terms under \(p_{\rm ctrl}\)}
			\State run Algorithm~\ref{alg:pf} once \(\Rightarrow\) updated ensemble and \(\hat\pi\gets\hat\pi_{t+\Delta}\)
			\EndFor
			\State \Return realized cost $\mathcal J+X^2-k_{1}X$,
			and, optionally, the predictive filter error $
			\mathrm{RMSE}=\left(	\frac1N\sum_{i=1}^{N}(\hat\pi_{t_i}-X_{t_i})^2
			\right)^{1/2}.$
		\end{algorithmic}
	\end{algorithm}

	\paragraph{Benchmark implementations.}
	The proposed scheme, the jump-blind particle filter, the moving-average estimator, and the Gaussian-projection filter all use the same Riccati feedback form in Algorithm~\ref{alg:ctrl}; they differ only in how the state estimate \(\hat\pi_t\) is produced. The plant dynamics, the observation process, the cost functional~\eqref{eq:acost}, and the Riccati gain are otherwise kept fixed. In addition, we include two reference policies: a full-information oracle, which replaces \(\hat\pi_t\) by the true state \(X_t\) in the same certainty-equivalent feedback law, and a no-control baseline \(u_t\equiv0\).
	
	Specifically, 
	(a) the \emph{proposed} scheme uses Algorithm \ref{alg:pf} with jump-aware prediction. 
	(b) The \emph{jump-blind} particle filter is identical to
	Algorithm \ref{alg:pf} but sets \(J^i\equiv0\) in the prediction step.
	(c) The \emph{full-information oracle} sets $\hat\pi_t\equiv X_t$. Since the Riccati gains coincide with those of the full-information LQ problem, this yields the full-information optimal control and serves as a  full-information benchmark. 
	(d) The \emph{no-control} law sets \(u_t\equiv0\). 
	(e) The \emph{moving-average} estimator sets $
	\hat\pi_t=\frac{\sum_k y_k c_k}{\sum_k c_k}$,  where  $y_1<\cdots<y_n$ are the price-level grid points and $c_k$
	counts events in channel $k$ over a sliding window of $W_{\rm ma}$ ticks. If the window is empty, it sets \(\hat\pi_t=\bar X\). 
	(f) The \emph{Gaussian-projection} (GP) filter propagates a mean--variance pair \((m,V)\) under \(p_{\rm ctrl}\). Between observation 	jumps, $
	m\gets m+[-\vartheta(m-\bar X)-\rho u_t]\Delta$, $
	V\gets V+[-2\vartheta V+\sigma^2+\lambda_JV_J]\Delta, $
	where \(V_J=p_{\rm up} a^2+(1-p_{\rm up})b^2\). When channel \(k\) fires, it applies the conjugate update $ D=1+V\tau_{\rm upd}$, $ 
	m\gets m+\frac{V\tau_{\rm upd}}{D}(y_k-m),$ and $ V\gets V/D$, 
	with $\tau_{\rm upd}=2\beta$ approximating
	$-\partial_{xx}\log\lambda_k$ at $x=m$ when the firing channel
	dominates the softmax. Thus the state-jump contribution enters only through the second moment \(V_J\), and the skewness induced by asymmetric jumps is not represented.

	\paragraph{Paired Monte Carlo Protocol}
	To ensure fair comparability, all simulations employ common random numbers: each replication shares identical initial states and process noise across all methods, while the particle filters utilize independent internal seeds. The supplementary experiments reported in Appendix~\ref{App:3} include robustness checks under parameter misspecification and a particle-count sensitivity test.

	The per-step complexity of the particle filter is $O(N_p \cdot n)$ 
	for the weight update and $O(N_p)$ for prediction and resampling, 
	giving $O(N_p \cdot n)$ overall, compared with $O(n)$ for the 
	GP-type filter. The \(N_p\)-fold overhead is mitigated by the embarrassingly parallel structure across particles. The reported latencies in Table~\ref{tab:np} indicate the numerical scale of the implementation and are not used as a theoretical claim.

	\begin{table}[t]
		\centering 
		\caption{Baseline parameter configuration used in the numerical experiments.}
		\label{tab:parameters}
		\footnotesize 
		\setlength{\tabcolsep}{4pt} 
		\begin{tabular}{lll}
			\toprule
			Parameter & Meaning & Value \\
			\midrule
			$T$ & Time horizon & $2.0$ \\
			$\Delta $ & Time step & $0.0005$ \\
			$\bar X$ & Long-run efficient price & $1.0$ \\
			$\vartheta$ & Mean-reversion speed & $0.5$ \\
			$\rho$ & Market impact coefficient & $0.4$ \\
			$\sigma$ & Diffusion volatility & $0.15$ \\
			$k_1$ & Terminal target coefficient & $2.0$ \\
			$k_2$ & Running control cost & $0.5$ \\  
			$n$ & Number of price levels & $9$ \\
			$\bar a$ & Total observation intensity & $50.0$ \\
			$\beta$ & Observation sharpness & $15.0$ \\
			$\lambda_J$ & State jump intensity & $20.0$ \\
			$p_{\rm up}$ & Upward jump probability & $0.65$ \\
			$V_J$ & Jump variance & $0.0225$ \\
			$N_p$ & Default particle number & $1500$ \\
			\bottomrule
		\end{tabular}  
	\end{table}
	
	\paragraph{Implementation.} 
	The baseline configuration  is listed in full in Table~\ref{tab:parameters} .

	The feedback is implemented as two interacting recursions: a particle filter (Algorithm~\ref{alg:pf} given in Appendix \ref{App:2}) that returns $\hat\pi_t$, and a controller (Algorithm~\ref{alg:ctrl} given in Appendix \ref{App:2}) that evaluates the Riccati gain, drives the plant, and advances the filter. The estimate consumed at $t_i$ is the one-step predictive mean from the previous tick, matching the causal constraint that the control on $[t_i,t_{i+1})$ cannot use the counting increment generated during the same interval.
	
	Let \(\mathcal I_i\) denote the discrete observation history available at the beginning of the decision interval \([t_i,t_{i+1})\). Under our indexing convention, \(\mathcal I_i\) contains all counting increments generated before \(t_i\); equivalently, \(\mathcal I_i=\mathcal F^{\overrightarrow{Y}}_{t_i^-}\). The estimate consumed by the controller at \(t_i\) is the one-step predictive mean $ \hat\pi_{t_i}\approx \mathbb E[X_{t_i}\mid \mathcal I_i],$ which is the particle mean read after the prediction step of the previous tick. Because the Riccati gain \(P(t)\)
	is available in closed form, the only online quantity in the feedback law is \(\hat\pi_t\), so the per-step cost is dominated by the filter update.
	
	The weight update is written in likelihood-ratio form w.r.t. the reference measure under which each observation channel is a unit Poisson process. Since the intensities satisfy
	$\sum_{k=1}^n\lambda_k(x)\equiv \bar a$, the compensator 
	contribution $\sum_k(1-\lambda_k(x^i))\Delta=(n-\bar a)\Delta$ 
	is common to all particles and cancels under weight 
	normalization. We retain it in Algorithm~\ref{alg:pf} 
	to display the likelihood-ratio form.
	
	\section{Supplementary Experiments}\label{App:3}
	\paragraph{Six-method Monte Carlo comparison} 
	
	\begin{figure}[H]
		\centering
		\includegraphics[width=0.8\textwidth]{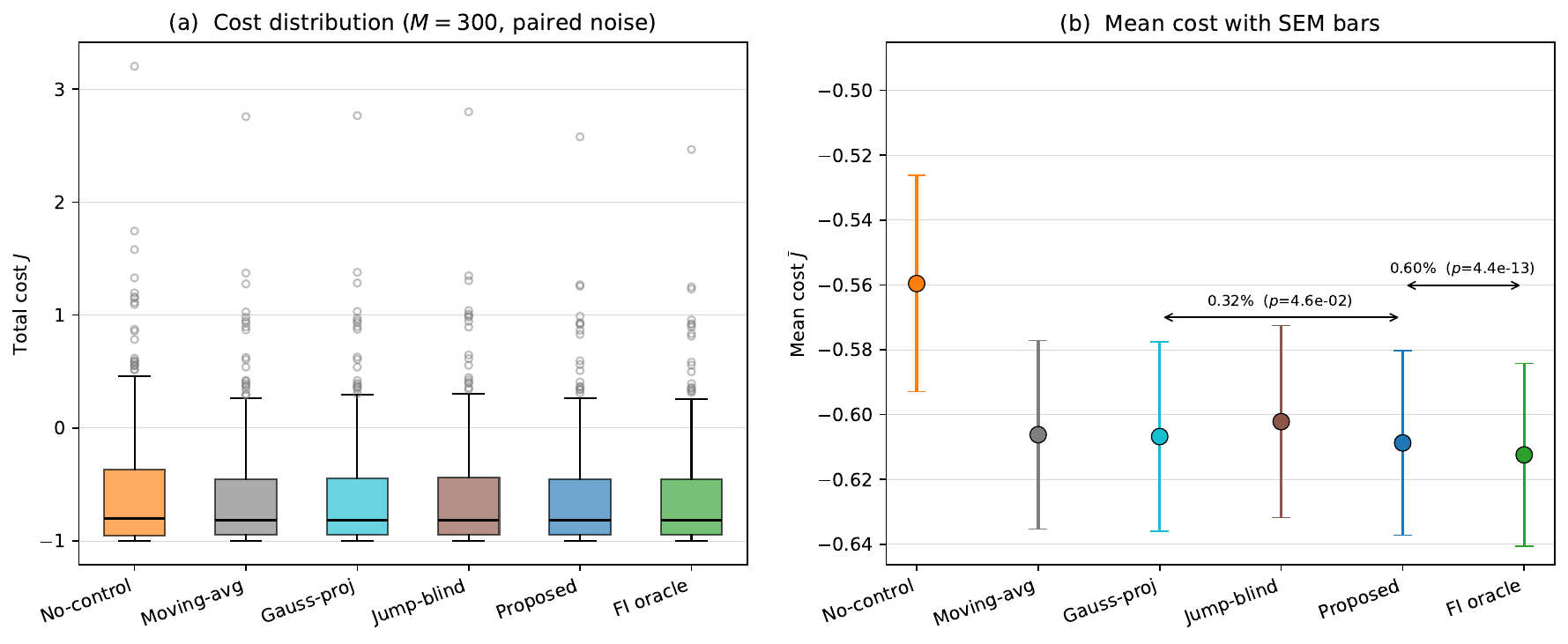}
		\caption{Six-method Monte Carlo comparison ($M=300$, paired
			noise, asymmetric zero-mean jumps): empirical cost
			distributions. The proposed controller concentrates near the full-information oracle, while the no-control and jump-blind schemes spread toward higher cost.}
		\label{fig:six-methods}
	\end{figure}

	\paragraph{Skewness sweep} 
	\begin{figure}[H]
		\centering
		\includegraphics[width=0.8\textwidth]{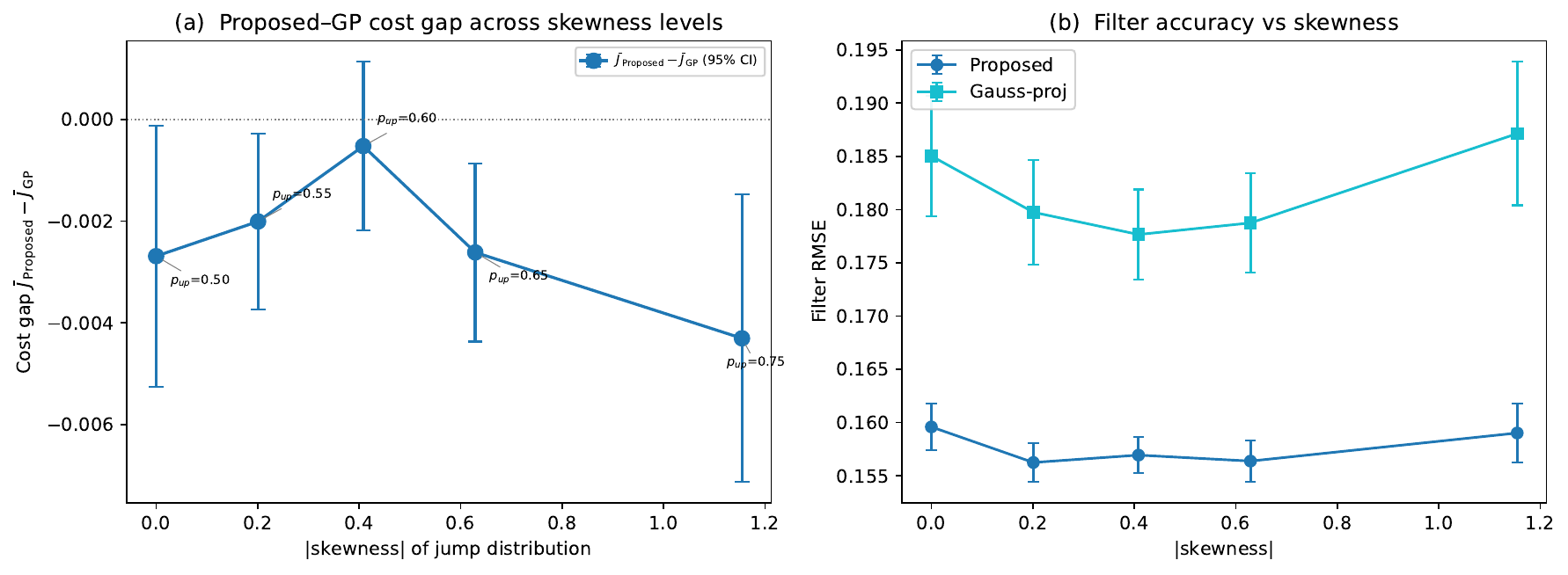}
		\caption{Skewness sweep ($M=300$). (a)~Paired cost difference
			\(\bar J_{\rm Proposed}-\bar J_{\rm GP}\) with \(95\%\) confidence
			intervals; negative values favor the proposed controller. (b)~Filter
			RMSE versus \(|\mathrm{skewness}|\) for the proposed particle filter and
			the Gaussian-projection filter.}
		\label{fig:skewness}
	\end{figure}
	
	\begin{table}[htbp]
		\small  
		\centering
		\caption{ Skewness sweep ($M=300$ replications per row). Both filters use the matched model; only the filtering approximation
			differs. The $\pm$ values report $95\%$ confidence intervals of
			the paired mean difference. All $p$-values are two-sided paired
			$t$-tests.}
		\label{tab:skew} 
		\begin{tabular}{cccccc}
			\toprule
			$p_{\rm up}$ & Skew. & $\bar J_{\rm Proposed}$ & $\bar J_{\rm GP}$
			& $\bar J_{\rm Proposed}-\bar J_{\rm GP}$ & $p$-val \\
			\midrule
			$0.50$ & $+0.000$ & $-0.6034$ & $-0.6008$ & $-0.0027\pm0.0026$ & $4.18e-02$ \\
			$0.55$ & $-0.201$ & $-0.6113$ & $-0.6093$ & $-0.0020\pm0.0017$ & $2.35e-02$ \\
			$0.60$ & $-0.408$ & $-0.6661$ & $-0.6656$ & $-0.0005\pm0.0017$ & $5.37e-01$ \\
			$0.65$ & $-0.629$ & $-0.6337$ & $-0.6311$ & $-0.0026\pm0.0018$ & $3.81e-03$ \\
			$0.75$ & $-1.155$ & $-0.6338$ & $-0.6295$ & $-0.0043\pm0.0028$ & $3.11e-03$ \\
			\bottomrule
		\end{tabular}
	\end{table} 
	We note that absolute cost levels are not compared across rows,
	since fixing $V_J$ while varying $p_{\rm up}$ changes the
	calibrated jump sizes $(a,b)$; the relevant quantity is the
	within-row paired difference.  The nominal row
	($p_{\rm up}=0.65$) and Table~\ref{tab:six} are computed on
	independent noise ensembles, so their absolute costs need not coincide.

	\paragraph{Single-path Illustration}
	\begin{figure}[H]
		\centering
		\includegraphics[width=\textwidth]{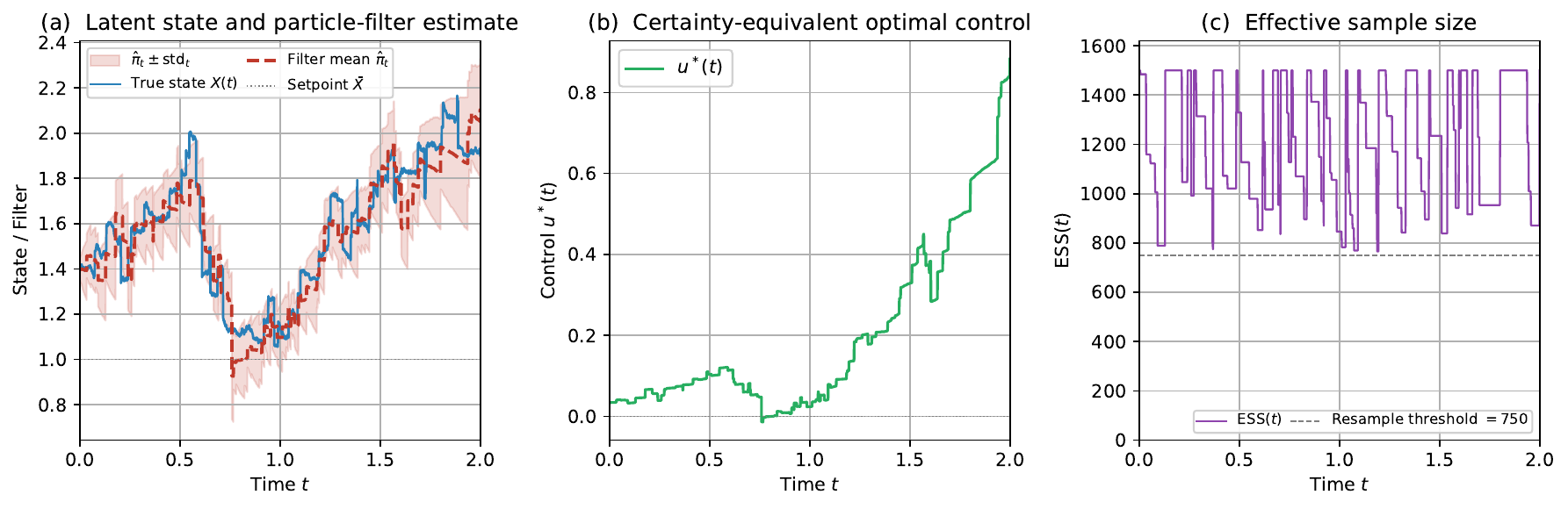}
		\caption{Representative closed-loop realization ($N_p=1500$):
			latent state and filter estimate~(a), certainty-equivalent
			control~(b), and effective sample size~(c).}
		\label{fig:single-path}
	\end{figure}
	
	Figure~\ref{fig:single-path} displays a representative realization.
	The filter mean $\hat\pi_t$ tracks the latent state $X(t)$, with the
	ensemble standard-deviation band widening briefly after large jumps
	and contracting at the next observation update~(a). The control
	$u (t)$ reflects the filtered deviation $\hat\pi_t-\bar X$ scaled by the horizon-dependent Riccati gain, which grows as $t\to T$~(b). The effective sample size periodically drops below the resampling
	threshold, triggering systematic resampling~(c).

	\paragraph{Robustness to parameter misspecification} 
	In practice, the controller/filter model $p_{\rm ctrl}$ may not
	match the true plant $p_{\rm true}$. We therefore perturb one
	parameter of $p_{\rm ctrl}$ at a time by a multiplicative factor in
	$\{0.50,0.80,1.00,1.20,1.50\}$, while keeping $p_{\rm true}$ and
	all other parameters fixed at their nominal values. The tested parameters are the mean-reversion speed \(\vartheta\), diffusion volatility \(\sigma\), and jump rate \(\lambda_J\). Each configuration uses \(M=300\) paired Monte Carlo replications.
	
	Figure~\ref{fig:robustness} reports the paired excess cost
	$\Delta\bar J(\alpha)=\bar J(\alpha)-\bar J(1)$, where \(\alpha\) is the misspecification factor; positive values indicate
	degradation relative to the correctly specified controller/filter. The controller remains robust across all three parameters. The largest sensitivity occurs for \(\vartheta\): underestimating the mean-reversion speed by a factor of two increases the cost by about \(5.3\times10^{-3}\), less than $1\%$ of the nominal cost magnitude $|\bar J|\approx 0.69$. This is consistent with the dual role of \(\vartheta\) in the Riccati gain~\eqref{eq:P-closed} and the filter prediction step.
	
	In contrast, misspecifying $\sigma$ produces changes in the closed-loop cost that are below the four-digit resolution of our estimates and statistically indistinguishable from zero: the confidence intervals in the $\sigma$ panel include zero at all misspecification levels. This is expected, since $\sigma$
	does not enter the Riccati system~\eqref{eq:AB-riccati} and affects the controller only through the particle prediction. Underestimating $\lambda_J$ yields a marginally negative excess cost, within $10^{-4}$ and not practically significant.
	
	Uniform misspecification of $\bar a$ is omitted, since $\bar a$
	enters the filter only through the likelihood weights, where the
	particle-common compensator term $\sum_k(1-\lambda_k)\Delta=(n-\bar a)\Delta$ cancels under weight
	normalization.
	
	\begin{figure}[H]
		\centering
		\includegraphics[width=\textwidth]{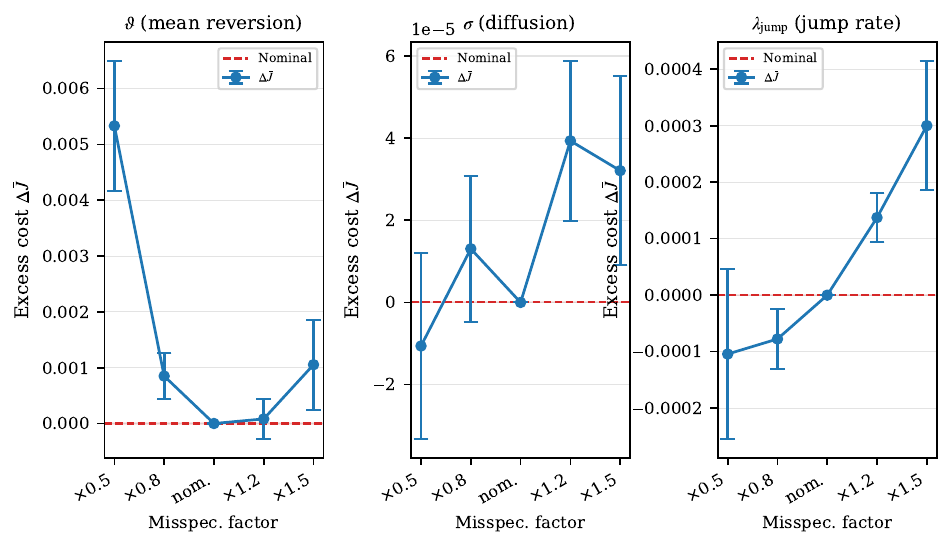}
		\caption{Robustness to parameter misspecification ($M=300$, paired noise). One controller/filter parameter is perturbed at a time while the true plant is kept nominal; error bars show standard errors.}
		\label{fig:robustness}
	\end{figure}

	\paragraph{Particle-count sensitivity} 
	Figure~\ref{fig:pareto_np} and Table~\ref{tab:np} report results for $N_p\in\{30,80,200,500,1500\}$ with $M=300$ replications sharing the same plant noise.  The closed-loop cost is nearly insensitive to  $N_p$: across all five particle counts, the point estimates fall  within the narrow band $[-0.5957,-0.5950]$, whose width is only  $0.0007$, far below the Monte Carlo SEM ($\pm0.029$). This is consistent with the certainty-equivalent feedback depending only on the filtered mean. The full-information benchmark $\bar J_{\rm FI}=-0.5992$ is computed
	on the same noise ensemble used for the $N_p$-sweep, which differs
	from the ensemble in Table~\ref{tab:six}, and is shown as a dashed line. All configurations operate within $1\%$ of this reference.
	
	The error bars in Figure~\ref{fig:pareto_np}(a) reflect
	cross-replication variance, not sensitivity to $N_p$; the five
	point estimates are visually indistinguishable, confirming that the
	closed-loop cost is dominated by plant noise rather than filter
	resolution.
	
	The filter RMSE decreases from $0.1622$ at $N_p=30$ to $0.1558$ 
	at $N_p=1500$, with most of the gain realized by $N_p=200$ 
	($\mathrm{RMSE}=0.1570$). Per-step latency grows from $217\,\mu$s to $469\,\mu$s, well below the $50\times$ ratio of particle counts, because fixed per-step overhead dominates at these scales.
	\begin{figure}[H]
		\centering
		\includegraphics[width=0.8\textwidth]{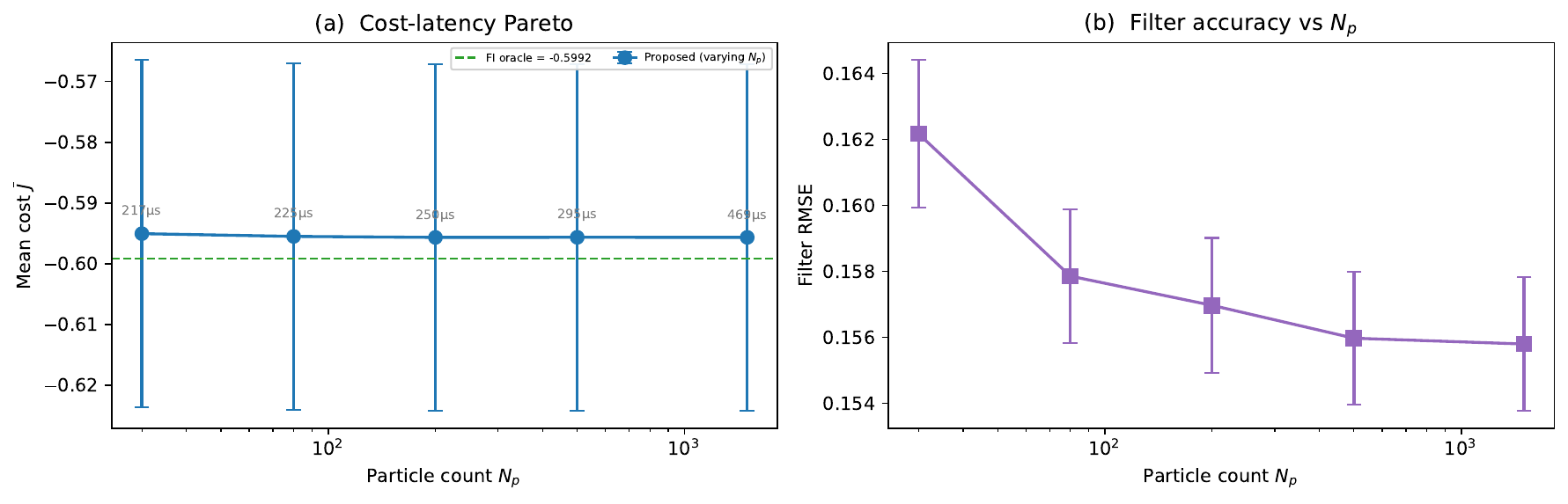}
		\caption{ Particle-count sensitivity. (a)~Mean cost versus \(N_p\), with per-step latency annotated; the dashed line marks the full-information benchmark computed on the same noise ensemble. (b)~Filter RMSE versus \(N_p\).} 
		\label{fig:pareto_np}
	\end{figure}
	
	\begin{table}[t]
		\centering
		\caption{Particle-count sensitivity ($M=300$). Single-core latency measured on a single representative path.}
		\label{tab:np} 
		\begin{tabular}{ccccc}
			\toprule
			$N_p$ & Latency ($\mu$s/step) & $\bar J$ & SEM & Filter RMSE \\
			\midrule
			30 & $217.3$ & $-0.5950$ & $\pm 0.0286$ & $0.1622$ \\
			80 & $224.6$ & $-0.5955$ & $\pm 0.0285$ & $0.1579$ \\
			200 & $250.5$ & $-0.5957$ & $\pm 0.0285$ & $0.1570$ \\
			500 & $294.6$ & $-0.5956$ & $\pm 0.0285$ & $0.1560$ \\
			1500 & $469.1$ & $-0.5957$ & $\pm 0.0285$ & $0.1558$ \\
			\bottomrule
		\end{tabular}
	\end{table}

\end{document}